\newcommand{\onehalf}{
\mbox{\Large $\frac{1}{\, 2 \,}$}
}
\newcommand{\sfrac}[2]{
\mbox{\small $\dfrac{#1}{#2}$}
}
\newcommand{\sdfrac}[2]{
\mbox{\scriptsize $\dfrac{#1}{#2}$}
}
\newcommand{\lbpi}{
\mbox{\normalsize $\pi$}
}
\newcommand{\lsblk}[1]{
\mbox{\fontsize{7pt}{0pt}$#1$}
}
\newcommand{\hilb}[1]{
\big({\mathcal H} #1 \big)
}
\newcommand{\fbepr}{
\mathrm{f}{\bm \beta}^{\prime}(a,b)
}
\newcommand{\nusV}{
\mbox{$\nu_{\mbox{\tiny $V$}}$}
}
\newtheorem{theorem}{Theorem}[section]
\newtheorem{proposition}[theorem]{Proposition}
\newtheorem{definition}[theorem]{Definition}
\newtheorem{remark}[theorem]{\it Remark}
\newtheorem{example}[theorem]{\it Example}
\begin{document}

\begin{center}
{\Large {\bf Remarks on a free analogue of the beta prime distribution}}\\

\bigskip 

{Hiroaki Yoshida}

\medskip 

Department of Information Sciences, \\
Ochanomizu University,\\
Otsuka, Bunkyo, Tokyo 112-8610 Japan.\\
{\ttfamily yoshida@ocha.ac.jp}

\smallskip

\bigskip 

\begin{minipage}{13cm}
\begin{spacing}{0.8}
{\small 
{\bf Abstract}.
We introduce the free analogue of the classical beta prime 
distribution by the multiplicative free convolution of 
the free Poisson and the reciprocal of free Poisson distributions,
and related free analogues of the classical $F$, $T$, and 
beta distributions. 
We show the rationales of our free analogues via the score functions
and the potentials. We calculate the moments of the free beta 
prime distribution explicitly in combinatorial by using non-crossing 
linked partitions, and demonstrate that the free beta prime distribution 
belongs to the class of the free negative binomials in the free 
Meixner family.\\
}

{\small
{\bf Keywords:} Free Beta prime distribution; 
          Free $F$-distribution; 
          Free $T$-distribution; 
          Free Beta distribution; 
          Non-crossing linked partitions; 
          Free Meixner family \\
\\
{\bf Mathematics Subject Classification (2010)} 46L54, 60E05
}

\end{spacing}

\end{minipage}
\end{center}


\section{Introduction}
\label{intro}

A non-commutative or quantum probability space is a unital algebra 
$\mathcal{A}$  (possibly non-commutative) together with a linear 
functional, 
$\varphi : \mathcal{A} \to \mathbb{C}$ such that 
$\varphi({\bm 1}) = 1$. If $\mathcal{A}$ is a $C^*$-algebra and
$\varphi$ is a state then we call a non-commutative probability space
$(\mathcal{A}, \varphi)$ a $C^*$-probability space. 

An element in $\mathcal{A}$ is regarded as a non-commutative 
random variable. 
The distribution of $X \in \mathcal{A}$ under $\varphi$ is 
the linear functional 
$\mu_X : \mathbb{C}[\mathbb{X}] \to \mathbb{C}$ where 
$\mathbb{C}[\mathbb{X}]$ is the algebra of polynomials in the variable 
$\mathbb{X}$ and $\mu_X$ is determined by 
$\mu_X \big( p(\mathbb{X}) \big) = \varphi \big( p(X) \big)$ for 
all polynomials $p(\mathbb{X}) \in \mathbb{C}[\mathbb{X}]$.
One should note that the distribution of a random variable 
$X \in \mathcal{A}$ is nothing more than a way of describing the 
moments $\varphi(X^k)$.

If $(\mathcal{A}, \varphi)$ is a $C^*$-probability space and 
$X \in \mathcal{A}$ is self-adjoint then the distribution of $X$ 
is given by the compactly supported probability measure $\nu$ 
on $\sigma (X)$, the spectrum of $X$, by 
$$
   \mu_X \big( p(\mathbb{X}) \big) = \int_{\sigma (X)} p(t) d \nu(t) \;
\mbox{ for } p(\mathbb{X}) \in \mathbb{C}[ \mathbb{X} ].
$$
In this case, we will call the measure $\nu$ the distribution of $X$ 
and we will write $\mu_X$ instead of $\nu$.  This is the case we treat, 
that is, the probability measures in this paper are compactly 
supported on ${\mathbb R}$.

\smallskip

The notion of independence can be understood as a rule of calculating 
mixed moments. In classical (usual) probability such a 
rule leads to only one meaningful notion of (commutative) independence.
In non-commutative case, however, one can consider several notions of 
independence. Among them, the free independence, first introduced by 
Voiculescu in \cite{Vo85}, seems to be the most interesting and 
important notion in non-commutative probability.

Presently, the non-commutative probability equipped with free independence 
is called free probability, and its theoretical framework is very similar 
to that of classical probability.
With a notion of independence, one can consider the corresponding 
convolution of probability measures.
Namely, for freely independent random variables $X$ and $Y$ with 
respective distributions $\mu_X$ and $\mu_Y$, we introduce 
the additive free convolution of $\mu_X$ and $\mu_Y$ as 
the distribution of the sum of the random variables $X + Y$.

In classical probability the product of independent random 
variables is rather trivial. But in free probability it has a much 
richer structure because free independence is highly non-commutative.
Hence it is worth defining the multiplicative free convolution as 
the distribution of the product of freely independent random 
variables. 
The precise definitions of free independence and the corresponding 
convolutions are mentioned in Section 2 below. The same section 
introduces some analytic tools for the calculation of the additive 
free and the multiplicative free convolutions.

\smallskip

It is a natural approach in free probability to look for the free 
analogue of the classical distributions with the same probabilistic 
framework as in classical probability. For example, Wigner's 
semicircle law plays the Gaussian law role in free probability, 
which can be obtained as the free central limit distribution, and 
the Marchenko-Pastur distribution appears as the limit distribution 
of the free Poisson limit, hence it is often called the free 
Poisson distribution.

\smallskip

Surprisingly many classical characterization problems of the 
probability measures by independence have free probability counterparts. 
For example, 
the free analogue of Bernstein's theorem was proved in \cite{Ni96}, 
which says that for freely independent random variables $X$ and $Y$, 
the random variables $X + Y$ and $X - Y$ are freely independent 
if and only if $X$ and $Y$ are distributed according to the 
semicircle law. 
Moreover, the characterization of the semicircle law by free 
independence of linear and quadratic statistics in freely i.i.d. 
random variables was investigated in \cite{EL17} 
(see also \cite{HKNY99}).

\smallskip

Another free analogue of characterization problem by independence, 
in \cite{Sz15}, \cite{Sz16}, that is, a free version of Lukacs theorem, 
the classical version states that, for 
independent random variables $X$ and $Y$, the random variables $X + Y$ 
and $\sfrac{X}{X + Y}$ are independent if and only if $X$ and $Y$ 
are gamma distributed with the same scale parameter \cite{Lu55}.
Here one should note that in the free analogue of Lukacs theorem, 
the free Poisson distribution plays the classical gamma distribution
role.  However, it is not very strange because the 
free Poisson distribution can be thought as the free 
$\chi^2$-distribution and the classical $\chi^2$-distribution is 
in a family of the classical gamma distributions.
This phenomenon can be found in yet another characterization 
problem by free independence, namely, in the study of the free 
analogue of Matsumoto-Yor property in \cite{Sz17}, which 
elucidated that the role of the classical gamma distribution is 
taken again by the free Poisson distribution.

\smallskip

Furthermore, in the free probability literature there are 
investigations on the free analogue of the characterization 
problems of the probability measure by regression conditions.
A well-known example of such a direction is the characterization 
of the free Meixner distributions defined in \cite{An03}
(see also \cite{SY01}). 
Particularly, in the papers \cite{BoBr06} and \cite{Ej12}, the authors 
studied the free analogue of Laha-Lukacs regression problem, and 
found that the free Meixner distributions can be classified in 
six classes: semicircle, free Poisson, free Pascal
(free negative binomial), free gamma, free binomial, and 
pure free Meixner distributions, which corresponds to the result 
in the classical case. 
Inspired by this result, free analogue of characterization problems 
by regression were further investigated in \cite{Ej14}, \cite{EFS17}, 
and \cite{SW14}. 

\smallskip

In this paper we look for the free analogue of the classical 
distribution as in the literature of free probability to date.

In particular, the free beta prime distribution is investigated. 
In classical probability a certain dilation of the beta prime 
distribution gives the $F$-distribution, thus the 
free $F$-distribution can be also obtained immediately.
Broadly speaking we define the free beta prime distribution as 
the distribution of the ratio of freely independent pair of free 
Poisson random variables that are based on the property that the 
distribution of the ratio of independent classical gamma distributed 
random variables yields the beta prime distribution.
In other words, we will make the free Poisson random variables 
play the free gamma random variables role again. 
The free $T$-distribution and the free beta distribution can be 
also derived from the specialized free $F$-distribution and the free
beta prime distribution, respectively, via some transformations.

Furthermore, by using the combinatorial object, the non-crossing 
linked partitions, first introduced in \cite{Dy07} 
with some set partition statistics, we will investigate the moments 
of the free beta prime distributions, and show that the free beta 
prime distributions can be classified into the free negative 
binomials of the free Meixner family.

\smallskip

The paper is organized as follows, in Section 2 we recall the 
definitions of free independence and of the corresponding additive 
and multiplicative convolutions, and we provide the analytic tools 
for calculation of these convolutions. 
We introduce the free beta prime distribution and some related 
analogues in Section 3; the rationales of which are discussed 
by the score functions in Section 4. 
In Section 5, we review the non-crossing linked partitions and 
some set partition statistics, and we give the combinatorial moment 
formula related to the multiplicative free convolution by using 
these set partition statistics. 
We investigate the moments of the free beta prime distributions 
in Section 6 and determine the class of the free beta prime 
distributions.

\medskip

%
%
\section{Preliminaries on free probability}
\label{sec:2}

In this section, we recall the definition of free independence 
and analytic tools for the calculation of the corresponding additive
and multiplicative convolutions. The introduction here is far from 
being detailed. A comprehensive and good introduction to the theory 
of free probability can be found in the monographs \cite{MS17},
\cite{NS06}, and \cite{VDN92}.

\subsection{Free independence and additive free convolution}
\label{sec:2_1}

A family of subalgebras $\big( \mathcal{A}_i \big)_{i \in I}$, where 
${\bm 1} \in \mathcal{A}_i$, 
in the non-commutative probability space $\big( \mathcal{A}, \varphi \big)$ 
is freely independent if $\varphi(X_1 \, X_2 \cdots X_n) = 0$ whenever 
$\varphi(X_k) = 0$, $1 \le k \le n$ and $X_k \in \mathcal{A}_{j(k)}$ 
where $j(1) \ne j(2) \ne \cdots \ne j(n)$, that is, consecutive 
indices are distinct.

We say that a family of random variables $\big( X_i \big)_{i \in I}$ 
are freely independent (simply, free) if the subalgebras generated by 
$\{ X_i, {\bm 1} \}$ are freely independent.

\smallskip

Let $X$ and $Y$ be free independent random variables in 
$(\mathcal{A}, \varphi)$ with respective distributions 
$\mu_X$ and $\mu_Y$. Then the distribution of $X + Y$ is 
completely determined by the distributions $\mu_X$ and $\mu_Y$.
Hence we define the additive free convolution operation $\boxplus$ 
of $\mu_X$ and $\mu_Y$ by 
$$
   \mu_X \boxplus \mu_Y = \mu_{X+Y}.
$$
Here we should note that the additive free convolution of distributions 
depends only on the distributions, not on the choice of particular 
random variables having those distributions, thus the operation 
$\boxplus$ is well-defined. 

\smallskip

In order to calculate the additive free convolution, that is,
to find the higher moments $\varphi \big( (X + Y)^n \big)$, 
Voiculescu invented the $R$-transform in \cite{Vo86}.

For compactly supported probability measure $\mu$ on ${\mathbb R}$, 
the $R$-transform of $\mu$ is the power series 
$$
   R_\mu (z) = \sum_{n = 1}^{\infty} r_{n} z^{n-1} 
$$
defined as follows.  Consider the Cauchy transform $G_\mu$ of 
the compactly supported probability measure $\mu$ on ${\mathbb R}$
$$
   G_\mu (z) = \int_{\mathbb R} \frac{d \mu(x)}{z - x}, 
$$
which is an analytic function in 
$\mathbb{C} \setminus \mathrm{supp}(\mu)$. 
We take the inverse of $G_\mu (z)$ in a neighborhood of $\infty$ 
because $G_\mu (z)$ is univalent and analytic there.

Then the $R$-transform of $\mu$ can be obtained as 
$$
  R_{\mu} (z) 
    = G_{\mu}^{\mbox{\tiny $\langle  {-1} \rangle $}} (z) 
         - \frac{1}{\, z \,}, 
$$
which is an analytic function in a neighborhood of $0$, 
where $G_{\mu}^{\mbox{\tiny $\langle  {-1} \rangle $}}$ 
denotes the inverse of $G_{\mu}$.

The $R$-transform is the linearizing functor for the additive 
free convolution, that is, 
$$
  R_{\mu \boxplus \nu} (z) = R_{\mu} (z) + R_{\nu} (z).
$$
Hence the coefficients $\{ r_n \}$ of the $R$-transforms are called the 
free cumulant.

\subsection{Full Fock space and canonical random variables}
\label{sec:2_2}

An example of free independence which plays an important role in 
the theory of free probability is that of the creation and the 
annihilation operators on full Fock space.

\smallskip

  Let $\mathcal{H}$ be a Hilbert space and let $\mathcal{T}(\mathcal{H})$ 
denote the full Fock space of $\mathcal{H}$ 
$$
  \mathcal{T}(\mathcal{H}) = \mathbb{C} \Omega \oplus 
            \Big( \bigoplus_{n = 1}^{\infty} \mathcal{H}^{\otimes n} \Big),
$$
where 
{\small 
$\displaystyle{\mathcal{H}^{\otimes n} = 
       \underbrace{\mathcal{H} \otimes \cdots 
       \otimes \mathcal{H}}_{\footnotesize n-\mbox{times}}}$
}
and $\Omega$ is the distinguished unit vector called vacuum.
The full Fock space $\mathcal{T}(\mathcal{H})$ is endowed with 
the inner product
$$
  \langle \, 
    x_1 \otimes x_2 \otimes \cdots \otimes x_n \, |  \, 
    y_1 \otimes y_2 \otimes \cdots \otimes y_m \,
  \rangle
  = \delta_{n,m} \prod_{i = 1}^{n} (\, x_i \,| \, y_i \,), 
$$
where $(\, \cdot \,| \,  \cdot \,)$ is
the inner product on the Hilbert space $\mathcal{H}$.

\smallskip

For $h \in \mathcal{H}$, let 
$\ell(h) \in \mathcal{B} \big( \mathcal{T}(\mathcal{H})\big)$ 
be the left creation operator so that 
$\ell(h) \xi = h \otimes \xi$, and the adjoint operator $\ell(h)^{*}$ 
is called the left annihilation operator.
We consider the state $\varphi(T) = \langle T \Omega, \Omega \rangle$ 
on $\mathcal{B} \big( \mathcal{T}(\mathcal{H})\big)$ given by
the vacuum vector $\Omega$, which is called the vacuum expectation.
The following important example of a free family of subalgebras and 
the distinguished operators in 
$\mathcal{B} \big( \mathcal{T}(\mathcal{H})\big)$ can be found in 
\cite{Vo86}.

\noindent
\begin{theorem}\label{th:2_1}
Let $( e_i )_{i \in I}$ be a family of orthonormal vectors 
in $\mathcal{H}$. Then the family of subalgebras 
$\big( \mbox{alg} \{ \ell(e_i), \ell(e_i)^{*} \} \big)_{i\in I}$ 
is free with respect to the vacuum expectation.
\end{theorem}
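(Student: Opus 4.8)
Write $\mathcal{A}_i := \mathrm{alg}\{\ell(e_i),\ell(e_i)^{*}\}$. The plan is to verify directly the defining property of free independence from Section~\ref{sec:2_1}: given operators $X_1,\dots,X_n$ with $X_k \in \mathcal{A}_{j(k)}$, consecutive indices distinct, and $\varphi(X_k)=0$ for each $k$, I must show $\varphi(X_1\cdots X_n)=0$. The first step is to record the basic relation $\ell(e_i)^{*}\ell(e_i)=\mathrm{I}$, which holds because $\ell(e_i)^{*}\ell(e_i)\xi=(e_i|e_i)\,\xi=\xi$ for every $\xi$ (the $e_i$ are unit vectors). Repeatedly applying this identity puts any word in $\ell(e_i)$ and $\ell(e_i)^{*}$ into the normal-ordered form $\ell(e_i)^{a}\,(\ell(e_i)^{*})^{b}$, so each $\mathcal{A}_i$ is linearly spanned by these monomials. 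A short computation with the vacuum, using $(\ell(e_i)^{*})^{b}\Omega=0$ for $b\ge 1$ and $\ell(e_i)^{a}\Omega=e_i^{\otimes a}$, gives $\varphi(\ell(e_i)^{a}(\ell(e_i)^{*})^{b})=\delta_{a,0}\delta_{b,0}$. Hence a centered element of $\mathcal{A}_i$ is precisely a linear combination of monomials $\ell(e_i)^{a}(\ell(e_i)^{*})^{b}$ with $(a,b)\ne(0,0)$.

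By multilinearity of $\varphi(X_1\cdots X_n)$ in its arguments, it then suffices to treat the case where each $X_k=\ell(e_{j(k)})^{a_k}(\ell(e_{j(k)})^{*})^{b_k}$ with $(a_k,b_k)\ne(0,0)$. I would compute $X_1\cdots X_n\,\Omega$ by letting the operators act from the right and peeling one factor at a time, proving by downward induction on $k$ that $\xi_k:=X_k\cdots X_n\,\Omega$ is either $0$ or a nonzero scalar multiple of a simple tensor of length at least one whose leading factor is $e_{j(k)}$. In the inductive step the annihilation block $(\ell(e_{j(k)})^{*})^{b_k}$ meets a vector whose first factor is $e_{j(k+1)}$; since $j(k)\ne j(k+1)$ and the $e_i$ are orthonormal, a single annihilation already produces $(e_{j(k)}|e_{j(k+1)})=0$, so if $b_k\ge 1$ then $\xi_k=0$, whereas if $b_k=0$ then $a_k\ge 1$ and the creation block $\ell(e_{j(k)})^{a_k}$ prepends $a_k$ copies of $e_{j(k)}$, preserving the claimed form. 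The base case $k=n$ is the same dichotomy applied to $\Omega$ directly.

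The conclusion is then immediate: $\xi_1=X_1\cdots X_n\,\Omega$ is either $0$ or lies in the tensor components of degree $\ge 1$, which are orthogonal to $\Omega$, so $\varphi(X_1\cdots X_n)=\langle \xi_1,\Omega\rangle=0$. The step requiring most care is not any single computation but the bookkeeping of the normal form together with the precise place where orthogonality enters: the hypothesis $j(k)\ne j(k+1)$ is used exactly to guarantee that an annihilation operator coming from $\mathcal{A}_{j(k)}$ cannot cancel the leading creation produced by $\mathcal{A}_{j(k+1)}$, which is what forces every admissible word to strictly raise the tensor degree and so leave the vacuum component. Establishing the normal-ordered spanning set and the resulting characterization of centered elements is the part I would be most careful to state cleanly, since the whole argument rests on it.
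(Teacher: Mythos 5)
Your proof is correct. Note that the paper itself offers no proof of this statement -- it is quoted as Theorem 2.1 with a citation to Voiculescu's original article \cite{Vo86} -- and your argument is essentially the classical one from that source and the standard references: normal-order each algebra via $\ell(e_i)^{*}\ell(e_i)=\mathrm{I}$, observe that centered elements are spanned by monomials $\ell^{a}(\ell^{*})^{b}$ with $(a,b)\neq(0,0)$, and check that an alternating product of such monomials either annihilates $\Omega$ (orthogonality of $e_{j(k)}$ and $e_{j(k+1)}$ killing the vector at the first annihilation) or sends it into the tensor components of degree at least one, hence orthogonal to $\Omega$. All the steps, including the bookkeeping of the leading tensor factor in your downward induction, check out.
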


\begin{proposition}
Let $e_1$ and $e_2$ be orthonormal unit vectors in $\mathcal{H}$, 
and $\ell_1 = \ell(e_1)$ and $\ell_2 = \ell(e_2)$ be the 
left creation operators in 
$\mathcal{B} \big( \mathcal{T}(\mathcal{H})\big)$. 
Let further 
$$
 \begin{aligned}
   X_1 & = \ell_1 + p_1 {\bm 1} + p_2 \ell_2^{*} 
             + p_3 \big( \ell_1^{*} \big)^2 + \cdots, \\
   X_2 & = \ell_2 + q_1 {\bm 1} + q_2 \ell_2^{*} 
             + q_3 \big( \ell_2^{*} \big)^2 + \cdots.
 \end{aligned}
$$
Then the random variables $X_1 + X_2$ and 
$$
   X_3 = \ell_1 + (p_1 + q_1) {\bm 1} 
                + (p_2 + q_2)  \ell_1^{*} 
                + (p_3 + q_3) \big( \ell_1^{*} \big)^2 + \cdots
$$
have the same distribution in 
$\big( \mathcal{B} \big( \mathcal{T}(\mathcal{H})\big), \varphi \big)$.
\end{proposition}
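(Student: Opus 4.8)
The plan is to identify the distributions of $X_1 + X_2$ and of $X_3$ through their $R$-transforms, combining the linearizing property of the $R$-transform with the freeness furnished by Theorem~\ref{th:2_1}. The essential ingredient is a canonical-realization formula: for a single left creation operator $\ell = \ell(e)$ attached to a unit vector $e$, any operator of the shape
$$
   Y = \ell + \sum_{k=0}^{\infty} c_k \, (\ell^{*})^{k},
$$
in which the $k=0$ summand is the scalar $c_0 {\bm 1}$, has $R$-transform $R_Y(z) = \sum_{k=0}^{\infty} c_k z^k$, equivalently free cumulants $r_n(Y) = c_{n-1}$. Granting this, each of $X_1, X_2, X_3$ is of this canonical shape (built from its own creation operator), so that the proposition reduces to the additivity of the $R$-transform.

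To establish the formula I would compute the (formal) Cauchy transform $G_Y(z) = \varphi\big( (z - Y)^{-1} \big)$ directly on the Fock space. Since $Y$ involves only $\ell$ and $\ell^{*}$, it preserves the cyclic subspace spanned by $\{ e^{\otimes n} : n \ge 0 \}$ (with $e^{\otimes 0} = \Omega$), so I may work there. Writing $F(z) = (z - Y)^{-1}\Omega = \sum_{n \ge 0} g_n(z)\, e^{\otimes n}$, so that $G_Y(z) = g_0(z)$, and using $\ell\, e^{\otimes n} = e^{\otimes(n+1)}$ together with $(\ell^{*})^{k} e^{\otimes n} = e^{\otimes(n-k)}$ for $n \ge k$ (and $0$ otherwise), the resolvent identity $(z - Y)F(z) = \Omega$ becomes the linear recursion
$$
   z g_0 - \sum_{k \ge 0} c_k g_k = 1, \qquad
   z g_m - g_{m-1} - \sum_{k \ge 0} c_k g_{m+k} = 0 \quad (m \ge 1).
$$
Seeking the decaying solution $g_n = g_0\, u^n$ and setting $f(u) = \sum_{k \ge 0} c_k u^k$, the equations for $m \ge 1$ all collapse to $z = \tfrac{1}{u} + f(u)$, after which the $m = 0$ equation forces $g_0 = u$. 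Hence $G_Y(z) = u$ with $z = \tfrac{1}{u} + f(u)$, that is $G_Y^{\langle -1 \rangle}(w) = \tfrac{1}{w} + f(w)$, and therefore $R_Y(w) = G_Y^{\langle -1 \rangle}(w) - \tfrac{1}{w} = f(w)$, as claimed.

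The assembly is then immediate. As $X_1 \in \mathrm{alg}\{\ell_1, \ell_1^{*}\}$ and $X_2 \in \mathrm{alg}\{\ell_2, \ell_2^{*}\}$, Theorem~\ref{th:2_1} makes $X_1$ and $X_2$ free, while the formula gives $R_{X_1}(z) = p_1 + p_2 z + p_3 z^2 + \cdots$ and $R_{X_2}(z) = q_1 + q_2 z + q_3 z^2 + \cdots$. By the linearizing property,
$$
   R_{X_1 + X_2}(z) = R_{X_1}(z) + R_{X_2}(z)
     = (p_1 + q_1) + (p_2 + q_2) z + (p_3 + q_3) z^2 + \cdots,
$$
which is precisely the $R$-transform the formula assigns to the canonical operator $X_3$. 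Since the $R$-transform (equivalently the full free-cumulant sequence, hence all moments) determines the distribution, $X_1 + X_2$ and $X_3$ have the same distribution.

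I expect the main obstacle to lie in the canonical-realization step, specifically in justifying that the geometric ansatz $g_n = g_0 u^n$ selects the genuine resolvent solution, namely the one with $u = G_Y(z) \to 0$ as $z \to \infty$ (the unique square-summable branch), rather than a spurious mode; this is what pins down the correct inverse branch of $G_Y$ near infinity. A secondary point to keep clean is that $X_1, X_2, X_3$ need not be self-adjoint, so throughout ``distribution'' must be understood as the moment sequence and the $R$-transform identities read as equalities of formal power series in the moment data, the combinatorial moment--cumulant relation then guaranteeing that equal $R$-transforms yield equal moments.
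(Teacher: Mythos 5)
Your argument is correct, and it is worth noting at the outset that the paper itself offers no proof of this proposition: it is quoted as part of the standard Fock-space machinery from Voiculescu's work, with the surrounding text using it to justify the canonical operator $\ell + R_\mu(\ell^*)$. Your resolvent computation is the right way to establish the canonical-realization formula: the recursion you derive for the coefficients $g_m$ is exactly what $(z-Y)F=\Omega$ gives, the geometric ansatz $g_n = g_0 u^n$ does solve it with $z = \tfrac{1}{u}+f(u)$ and $g_0=u$, and the branch issue you flag is correctly resolved at the level of formal power series: since $g_n = O(z^{-n-1})$, the system determines the $g_n$ uniquely order by order in $1/z$ (only finitely many $k$ contribute at each order), so the geometric solution is \emph{the} solution and $R_Y = f$ follows. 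You also silently repair what is evidently a typo in the statement ($p_2\ell_2^*$ should be $p_2\ell_1^*$ so that $X_1$ lies in $\mathrm{alg}\{\ell_1,\ell_1^*\}$); without that correction Theorem~\ref{th:2_1} would not apply.

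The one caveat concerns logical order rather than correctness. In Voiculescu's development this proposition is the engine that \emph{proves} the linearization property $R_{\mu\boxplus\nu}=R_\mu+R_\nu$: one shows directly (by a moment computation exploiting freeness of $\ell_1$ and $\ell_2$, or equivalently via the free moment--cumulant formula over non-crossing partitions, where the block weights of $X_1+X_2$ are visibly $p_{k}+q_{k}$) that $X_1+X_2$ and $X_3$ have the same moments, and additivity of the $R$-transform is then read off. Your proof runs the implication in the opposite direction, taking additivity as given. Within this paper that is legitimate, since Section~2.1 states the linearization property as background before the proposition appears; but if the proposition were intended as a step toward establishing that property, your argument would be circular. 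A version that computes $\varphi\big((X_1+X_2)^n\big)$ directly from freeness and matches it against $\varphi(X_3^n)$ would be self-contained in either reading.
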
\label{prop:2_2}

Since $\ell_1$ and $\ell_2$ are free with respect to the vacuum 
expectation, the above proposition gives the model for the additive 
free convolution. 
Namely, we denote the $R$-transform of a compactly supported probability 
measure $\mu$ on ${\mathbb R}$ by $R_\mu (z)$ then the random variable 
$$
   X = \ell + R_\mu \big(  \ell^{*} \big) 
$$
on $\mathcal{T}(\mathcal{H})$ is called 
the canonical operator for the additive free convolution. 
In this case, of course, the probability distribution of $X$ is 
given by $\mu$.

\subsection{Multiplicative free convolution}
\label{sec:2_3}

As we have mentioned in Introduction that by high non-commutativity 
of freeness, the product of freely independent random variables 
has a richer structure than the classical case. 
Here one can introduce the multiplicative free convolution 
operation on compactly supported provability measures on 
$\mathbb{R}_{\ge 0} = [0, \infty)$. 

\smallskip 

Let $X$ and $Y$ be freely independent random variables in 
$(\mathcal{A}, \varphi)$ with respective distributions 
$\mu_X$ and $\mu_Y$ compactly supported on $\mathbb{R}_{\ge 0}$. 
Then the distribution of 
$X^{\frac{1}{\, 2 \,}} Y X^{\frac{1}{\, 2 \,}}$ is 
determined only by the distributions $\mu_X$ and $\mu_Y$.
Thus we can define the multiplicative free convolution 
operation $\boxtimes$ of $\mu_X$ and $\mu_Y$ by 
$$
   \mu_X \boxtimes \mu_Y 
 = \mu_{X^{\frac{1}{\, 2 \,}} Y X^{\frac{1}{\, 2 \,}}}.
$$
The multiplicative free convolution is well-defined and becomes 
the commutative operation on compactly supported probability 
measures on ${\mathbb R}_{\ge0}$.

\smallskip

An analytic tool for calculation of the multiplicative free 
convolution is the $S$-transform invented by Voiculescu in \cite{Vo87},
 which is multiplicative map and plays the Mellin transform role 
in classical probability.

\smallskip 

Let $\mu$ be a compactly supported probability measure 
on $[0, \infty)$ with non-zero mean $m_1(\mu) \ne 0$.
We shall first introduce the $\Phi$-series of $\mu$, which is 
essentially the moment generating function but without constant
$$
  \Phi_\mu (z) = \sum_{n = 1}^{\infty} m_n (\mu) z^n 
      = \sfrac{1}{\, z \,} G_\mu \Big( \sfrac{1}{\, z \,} \Big) -1,
$$
where $m_n (\mu)$ stands for the $n$th moment of the probability 
measure $\mu$ and we denote by $G_\mu$ the Cauchy transform of $\mu$.

Then the $S$-transform and the $\Phi$-series are related by 
the functional relation 
$$
   \Phi_\mu \Big( \sfrac{z}{z + 1} S_\mu (z) \Big) = z.
$$
Thus the $S$-transform can be obtained as 
$$
  S_{\mu} (z) 
    = \sfrac{z + 1}{z} 
      \Phi_{\mu}^{\mbox{\tiny $\langle {-1} \rangle $}}(z).
$$
The $S$-transform is multiplicative map for the multiplicative 
free convolution, that is 
$$
 S_{\mu \boxtimes \nu} (z) = S_{\mu} (z) S_{\nu} (z).
$$

\smallskip 

Like in the additive case, the canonical operator for the 
multiplicative free convolution is known as follows.
Let $\mu$ be a compactly supported probability measure 
on $[0, \infty)$ with non-zero mean, we introduce 
the $T$-transform of $\mu$ by the reciprocal of 
the $S$-transform of $\mu$, that is, 
$$
    T_\mu (z) = \frac{1}{S_\mu (z)},
$$
the remarkable combinatorial properties of which were investigated 
by Dykema in \cite{Dy07}. Some of detailed results are discussed in 
Section 5 below. 

Then the canonical operators for the multiplicative case can be 
given by the operator on $\mathcal{T}(\mathcal{H})$ of the form 
$$
   X = ({\bm 1} +  \ell) T_\mu \big(  \ell^{*} \big).
$$

\medskip 

%
%
\section{The free beta prime distribution 
and some derived free analogues}
\label{sec:3}

\subsection{The free beta prime distribution}
\label{sec:3_1}

Let $P_\lambda$ be a free Poisson random variable of parameter 
$\lambda >0$, which has the probability distribution $\mu_\lambda$, 
the Marchenko-Pastur law of parameter $\lambda$, 
$$
   d \mu_\lambda(x) = 
   \frac{ \sqrt{ - (x - \lambda_{-})( x - \lambda_{+})}}{2 \pi x} 
   I_{[\lambda_{-}, \lambda_{+}]}(x) \, dx + \max(1-\lambda, 0) \delta_0 (x),
$$
where $\lambda_{\pm} = \big( 1 \pm  \sqrt{\lambda} \big)^2$ and $I$ stands
for the indicator function.

Then it is well-known that the Cauchy transform of $\mu_\lambda$ is 
given by 
$$
   G_{\mu_\lambda} (z) 
   = \frac{z + (1 - \lambda) 
        - \sqrt{\big( z + (1 - \lambda) \big)^2 - 4 z}}{2z},
$$
where the branch of the square root should be chosen so that 
$\displaystyle{\lim_{z \to \infty} G_{\mu_\lambda} (z) = 0}$.

For a self-adjoint random variable $X$ with the probability 
distribution $\mu$ on ${\mathbb R}$, we denote the Cauchy 
transform of $\mu$ by an abbreviated notation $G_X$. 
We shall apply such a notation to other characteristic series 
of the distribution of a non-commutative random variable. 

\smallskip 

Since we will calculate a multiplicative free convolution involving 
$P_\lambda$, we need the $S$-transform of $P_\lambda$, and it is given by 
$$
   S_{P_\lambda} (z) = \frac{1}{z + \lambda},
$$
which can be found in, for instance, \cite{BBCC11}.

\smallskip 

The following Proposition direct consequence of Proposition 3.13 
in \cite{HaSc07}. 
\begin{proposition}
Let $P_\lambda$ be a free Poisson random variable of parameter $\lambda$.
If the parameter $\lambda > 1$, then $P_\lambda$ is strictly positive and 
invertible, and the $S$-transform of the probability distribution of 
$P_\lambda^{-1}$ is given by 
$$
  S_{P_\lambda^{-1}} (z) = - z + (\lambda - 1).
$$
\end{proposition}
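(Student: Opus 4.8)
The plan is to reduce the statement to the known rule relating the $S$-transform of a positive measure to that of its reciprocal, which is exactly the content of Proposition~3.13 in \cite{HaSc07}. Concretely, for a compactly supported probability measure $\mu$ on $(0,\infty)$ whose inverse distribution $\mu^{-1}$ (the push-forward of $\mu$ under $t \mapsto 1/t$) again has non-zero mean, one has
$$
   S_{\mu^{-1}}(z) = \frac{1}{S_\mu(-1-z)}.
$$
Granting this identity, the proposition follows by a one-line substitution: with $S_{P_\lambda}(z) = \frac{1}{z+\lambda}$ we get $S_{P_\lambda}(-1-z) = \frac{1}{\lambda - 1 - z}$, hence $S_{P_\lambda^{-1}}(z) = \lambda - 1 - z = -z + (\lambda-1)$.

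Before applying the rule I would first justify that $P_\lambda^{-1}$ makes sense and that the rule is applicable. For $\lambda > 1$ the atom $\max(1-\lambda,0)\,\delta_0$ disappears and the Marchenko--Pastur law $\mu_\lambda$ is supported on $[\lambda_-,\lambda_+]$ with $\lambda_- = (1-\sqrt{\lambda})^2 > 0$; thus $\sigma(P_\lambda)$ is bounded away from $0$, so $P_\lambda$ is strictly positive and $P_\lambda^{-1}$ is a bounded positive operator whose distribution $\mu_\lambda^{-1}$ is compactly supported in $(0,\infty)$. Its mean is $\int t^{-1}\,d\mu_\lambda(t) = -G_{\mu_\lambda}(0)$, which a direct evaluation of the given Cauchy transform shows to equal $\frac{1}{\lambda-1}\neq 0$; this non-vanishing is precisely what guarantees $S_{\mu_\lambda^{-1}}$ is well defined.

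If I did not wish to cite \cite{HaSc07} I would derive the reciprocal rule from the analytic machinery of Section~2. Starting from $\Phi_\mu(z) = \frac{1}{z}G_\mu(1/z) - 1$ and the corresponding identity for the push-forward, one computes $G_{\mu^{-1}}(w) = \frac{1}{w} - \frac{1}{w^2}G_\mu(1/w)$, and hence $\Phi_{\mu^{-1}}(z) = -1 - \Phi_\mu(1/z)$. Inverting this relation gives $\Phi_{\mu^{-1}}^{\langle -1\rangle}(z) = 1/\Phi_\mu^{\langle -1\rangle}(-1-z)$, and feeding this into the defining formula $S_\mu(z) = \frac{z+1}{z}\Phi_\mu^{\langle -1\rangle}(z)$ produces the displayed reciprocal rule after the $\frac{z+1}{z}$ factors cancel.

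The routine substitution is immediate, so the real content sits entirely in the reciprocal rule, and that is where I expect the only genuine difficulty: controlling the domains and branches so that the formal manipulations of $\Phi_\mu$ and its compositional inverse are legitimate analytic identities near the relevant points (in particular that $\Phi_\mu^{\langle -1\rangle}$ exists and that the substitution $z \mapsto -1-z$ lands in its domain). As a consistency check one notes $S_{P_\lambda^{-1}}(0) = \lambda - 1 = 1/m_1(\mu_\lambda^{-1})$, matching the mean computed above, which confirms that the sign and the shift in the argument are correct.
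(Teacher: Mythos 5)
Your proposal is correct and follows essentially the same route as the paper, which simply invokes Proposition~3.13 of Haagerup--Schultz to get the reciprocal rule $S_{\mu^{-1}}(z) = 1/S_\mu(-1-z)$ and substitutes $S_{P_\lambda}(z) = 1/(z+\lambda)$; your positivity check and the self-contained derivation of the reciprocal rule via $\Phi_{\mu^{-1}}(z) = -1 - \Phi_\mu(1/z)$ are both sound and in fact supply more detail than the paper gives.
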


\smallskip

Now we can introduce the free analogue of the beta prime distribution 
by the free multiplicative convolution of the distributions of 
free Poisson and the reciprocal of free Poisson random 
variables. 

\smallskip

Let $P_a$ and $P_b$ be freely independent free Poisson random variables 
of parameter $a$ and $b$, respectively.
We assume parameter $b > 1$ while $a > 0$. 
Since $b > 1$ the random variable $P_b$ is strictly positive, 
we can define the self-adjoint operator 
$$
  X(a,b) = P_b^{- \frac{1}{\,2\,}} \, P_a \, P_b^{- \frac{1}{\,2\,}}, 
$$
and its probability distribution is given by the multiplicative free 
convolution of the distributions of $P_a$ and $P_b^{-1}$, 
$$
 \mu_{X(a,b)} = \mu_{P_a} \boxtimes \mu_{P_b^{-1}}.
$$

\begin{proposition}
The Cauchy transform of the distribution of $X(a,b)$ is given by 
$$
 G_{X(a,b)} (z) 
 = \frac{ (b + 1) z + (1 - a) - 
     \sqrt{(b - 1)^2 z^2 - 2 (a b + a + b -1) z + (a -1)^2}}{2 z (1 + z)},
\eqno{(1)}
$$
where the branch of the square root is chosen so that 
$\displaystyle{\lim_{z \to \infty} G_{X(a,b)} (z) = 0}$.

\end{proposition}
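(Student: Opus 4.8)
The plan is to compute the $S$-transform of $X(a,b)$ by multiplicativity, invert the functional equation relating the $S$-transform and the $\Phi$-series to obtain $\Phi_{X(a,b)}$ as an implicit algebraic function, and then translate the result back into the Cauchy transform.

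Since $\mu_{X(a,b)} = \mu_{P_a} \boxtimes \mu_{P_b^{-1}}$ and the $S$-transform is multiplicative, I first combine $S_{P_a}(z) = 1/(z+a)$ with $S_{P_b^{-1}}(z) = -z + (b-1)$ from the preceding Proposition to get
$$
S_{X(a,b)}(z) = \frac{(b-1) - z}{z + a}.
$$
Next I exploit the relation $\Phi_\mu\big(\frac{z}{z+1} S_\mu(z)\big) = z$. Putting $w = \frac{z}{z+1}S_{X(a,b)}(z)$, so that $\Phi_{X(a,b)}(w) = z$, and clearing denominators in $w(z+1)(z+a) = z\big((b-1)-z\big)$ turns the defining equation into the quadratic relation
$$
(w+1)\,z^2 + \big(w(a+1) - (b-1)\big)\,z + wa = 0,
$$
which determines $z = \Phi_{X(a,b)}(w)$ implicitly.

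To recover the Cauchy transform I use $G_\mu(\zeta) = \frac{1}{\zeta}\big(\Phi_\mu(1/\zeta) + 1\big)$. Setting $w = 1/\zeta$ and multiplying the quadratic through by $\zeta$ gives
$$
(1+\zeta)\,z^2 + \big((a+1) - (b-1)\zeta\big)\,z + a = 0, \qquad z = \Phi_{X(a,b)}(1/\zeta),
$$
and then $G_{X(a,b)}(\zeta) = (z+1)/\zeta$. The discriminant of this quadratic collapses to $(b-1)^2\zeta^2 - 2(ab+a+b-1)\zeta + (a-1)^2$; solving for $z$, forming $z+1$, and dividing by $\zeta$ reproduces the asserted expression (1) after relabeling $\zeta$ as $z$.

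The one genuine decision is the choice of branch of the square root. I fix it by the normalization $\lim_{z\to\infty} G_{X(a,b)}(z) = 0$, which forces the sign $-\sqrt{\cdots}$ in the numerator; the companion sign would make $G$ blow up at infinity. Beyond that, the argument is routine algebra, so the only places demanding care are the bookkeeping in the change of variables $w \leftrightarrow \zeta$ and the verification that the discriminant simplifies exactly to the stated radicand.
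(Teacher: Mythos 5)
Your proposal is correct and follows essentially the same route as the paper: multiplicativity of the $S$-transform, the functional relation between $\Phi_\mu$ and $S_\mu$, and a final passage to the Cauchy transform with the branch fixed by decay at infinity. The only difference is organizational --- the paper solves the quadratic explicitly for $\Phi_{X(a,b)}(z)$ before substituting $z \mapsto 1/z$, whereas you carry the implicit quadratic through the change of variables and solve it at the very end; the computations are identical.
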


\begin{proof} 
By the multiplicativity, the $S$-transform of $X(a,b)$ 
is given by 
$$
   S_{X(a,b)} (z) = S_{P_a} (z) S_{P_b^{\mbox{\tiny$-1\!$}}} (z) 
   = \frac{- z + (b - 1)}{z + a}.
\eqno{(2)}
$$
Thus we can calculate the moment generating function of $X(a,b)$ 
as follow.
$$
 \begin{aligned}
    \Phi_{X(a,b)}^{\mbox{\tiny$\langle -1 \rangle$}} (z)
     & = \frac{z}{1 + z} S_{X(a,b)} (z) 
       = \frac{ z (b -1 - z)}{(1 + z) (z + a)}, \\
    \Phi_{X(a,b)} (z) 
     & = \frac{ (b - 1) - (1 + a)z - 
         \sqrt{((b - 1) - (1 + a)z )^2 - 4 a z (z + 1) }}{2 (1 + z)},  
\; \mbox{ and } &  \\
    M_{X(a,b)} (z) & = \Phi_{X(a,b)} (z) + 1 \\
     & = \frac{ (b + 1) + (1 - a) z - 
         \sqrt{((b - 1) - (1 + a)z )^2 - 4 a z (z + 1) }}{2 (1 + z)}.
 \end{aligned}
$$
Eventually, the Cauchy transform is given by 
$$
 \begin{aligned}
    G_{X(a,b)} (z) & = \sfrac{1}{\, z \,} 
                  M_{X(a,b)} \Big( \sfrac{1}{\, z \,} \Big) \\
    & = \frac{ (b + 1) z + (1 - a) - 
           \sqrt{(b - 1)^2 z^2 - 2 (a b + a + b -1) z + (a -1)^2}}
             {2 z (1 + z)}.
 \end{aligned}
$$
\end{proof}

\medskip 

Applying Stieltjes inversion formula, one can obtain the probability 
measure of the distribution of $X(a,b)$.

\smallskip 

Concerning the point mass of the measure, the function $G_{X(a,b)} (z)$ 
has the simple poles at $z = 0$ and $z = -1$ and the residues 
are calculated as 
$$
   {\mathrm {Res}} \big( G_{B} (z) ; z = 0 \big) = \max (1 - a,0) 
  \; \mbox{ and } \; 
   {\mathrm {Res}} \big( G_{B} (z) ; z = -1 \big) = 0,
$$
respectively. 
Thus the measure has a point mass $(1 - a)$ at $0$ if $0 < a < 1$,
while $z = -1$ is removable singularity.

\smallskip 

The density $f_{\mbox{\scriptsize f}{\bm \beta}^\prime}(x ; a, b)$ 
of the Lebesgue absolutely continuous part is given by 
$$
 \begin{aligned}
  f_{\mbox{\scriptsize f}{\bm \beta}^\prime}(x ; a, b) 
   = &  - \lim_{\varepsilon \to +0} \frac{1}{\, \pi \,} 
   \mathrm{Im} \big( G_{X(a,b)} ( x + \varepsilon \mathrm{i}) \big) \\
   = & \frac{\sqrt{ - (b - 1)^2 x^2 + 2 (a b + a + b -1) x - (a -1)^2 }}
            {2 \pi x (1 + x)} 
 \end{aligned}
$$
which is supported on the interval $[\gamma_{-}, \gamma_{+}]$, 
where 
$\displaystyle{ \gamma_{\pm} 
        = \bigg( \sfrac{\sqrt{a b } \pm 
         \sqrt{a + b -1}}{b - 1} \bigg)^{\!2}
}$ 
are the roots of the quadratic equation
$(b - 1)^2 x^2 - 2 (a b + a + b -1) x + (a -1)^2 =0$.

\smallskip 

\begin{definition}{\rm
 For $a > 0$ and $b > 1$, the free beta prime distribution 
${\mathrm{f}{\bm \beta}}^{\prime} (a, b)$ 
of parameters $a$ and $b$ is the probability measure 
$$
  d \mu_{a,b} (x) = 
       \frac{(b - 1) \sqrt{- (x - \gamma_{+}) (x - \gamma_{-})} }
       {2 \pi x (1 + x)} \, I_{[\gamma_{-}, \gamma_{+}]}(x) \, dx 
       + \max(1 - a, 0)\, \delta_0 (x),
$$
where $\displaystyle{ \gamma_{\pm} 
        = \bigg( \sfrac{\sqrt{a b } \pm 
         \sqrt{a + b -1}}{b - 1} \bigg)^{\!2}}$.
}
\end{definition}

\subsection{The free $F$-distribution}
\label{sec:3_2}

Let $X_1$ and $X_2$ be the classical $\chi^2$-distributed random
variables of degree of freedom $d_1$ and $d_2$, respectively, and 
assume $X_1$ and $X_2$ are independent.
Then it is well-known that the probability distribution of random 
variable of the scaled 
ratio $Y = \sfrac{X_1/d_1}{X_2/d_2} 
   = \Big( \sfrac{d_2}{d_1} \Big) \sfrac{X_1}{X_2}$ is 
the $F$-distribution $F(d_1, d_2)$.

\smallskip 

As mentioned in the next section with respect to the rationales
of free analogue, the free Poisson law of parameter $\lambda$ can be 
regarded as the free $\chi^2$-distribution of degree of 
freedom $\lambda$, and we have introduced the free beta prime 
distribution ${\mathrm{f}{\bm \beta}^{\prime}}(a,b)$ essentially based 
on the ratio $\sfrac{P_a}{P_b}$ of freely independent pair of free 
Poisson random variables.
With these facts in mind, it is natural to introduce the free 
$F$-distribution in the following manner:

\begin{definition}{\rm
For $a > 0$ and $b > 1$, let $X(a,b)$ be a free beta prime 
${\mathrm{f}{\bm \beta}^{\prime}}(a,b)$-distributed random variable.
The free $F$-distribution $\mathrm{f}F(a,b)$ is defined 
as the distribution of the scaled random variable 
$\sfrac{b}{\, a \,} \, X(a,b)$, the probability measure 
of which is given by 
$D_{\frac{b}{\, a \,}} \big( \mu_{a,b} \big)$, where 
$D_{\frac{b}{\,a\,}}$ stands for the dilation.
}
\end{definition}

\smallskip

Here we shall give the measure 
$\nu_{a,b} = D_{\frac{b}{\,a\,}} \big( \mu_{a,b} \big)$ 
explicitly. If $0 < a < 1$ then $\nu_{a,b}$ has the point mass 
at $0$ with mass $(1 - a)$. 
The density for the Lebesgue absolutely continuous part 
of $\nu_{a,b}$ is given by 
$$
 \begin{aligned}
  \sfrac{a}{\, b \,} \, 
  f_{\mbox{\scriptsize f}{\bm \beta}^\prime}
  \Big( \sfrac{a}{\, b \,} x \, ; a, b  \Big)
= & \Big( \sfrac{a}{\, b \,} \Big) 
    \frac{(b - 1) \sqrt{- \big\{ \big( \frac{a}{\, b \,} \big) x - 
                                              \gamma_{+} \big\} 
                         \big\{ \big( \frac{a}{\, b \,} \big) x - 
                                              \gamma_{-} \big\} } }
    {2 \pi \big( \frac{a}{\, b \,} \big) \, x \,  
           \big\{ 1 + \big( \frac{a}{\, b \,} \big) \, x \big\} } \\
= & \frac{(b - 1) \sqrt{- \big\{ x - 
                       \big( \frac{b}{\, a \,} \big) \gamma_{+} \big\} 
                         \big\{ x - 
                       \big( \frac{b}{\, a \,} \big) \gamma_{-} \big\} } }
    {2 \pi \, x \, \big\{\big( \frac{b}{\, a \,} \big) +  x \big\} }.
 \end{aligned}
$$
Hence we have 
$$
 d \nu_{a,b} (x) = \frac{(b - 1) \sqrt{- (x - \eta_{+}) (x - \eta_{-})} }
            {2 \pi x \big\{\big( \frac{b}{\, a \,} \big) +  x \big\}}
            \, I_{[\eta_{-}, \eta_{+}]} dx 
   + \max \big(1 - a, 0 \big)\, \delta_0 (x), 
$$
where 
$$
  \eta_{\pm} 
   = \frac{b}{\, a \,} \, \bigg( \frac{\sqrt{a b } \pm 
                  \sqrt{a + b -1}}{b - 1} \bigg)^2.
$$

\begin{remark}{\rm
The free $F$-distribution introduced above is not very new one  
but it has been known as the limit spectral distribution of a multivariate 
$F$-matrix (random Fisher matrix) which is one of the important models 
of random matrices, see Chapter 4 in \cite{BS10}.  
In our setting, their result can be stated as follows:
Let the random matrix ${\bm F} = {\bm S}_{n_1} {\bm S}_{n_2}^{-1}$, 
where ${\bm S}_{n_i}$ $(i = 1, 2)$ is a sample covariance matrix with 
dimension $p$ and sample size $n_i$ with an underlying distribution of 
mean $0$ and variance $1$.
If ${\bm S}_{n_1}$ and ${\bm S}_{n_2}$ are independent and we take the 
limit with the asymptotic ratios $p/n_1 \to 1/a \in (0, \infty)$ and 
$p/n_2 \to 1/b \in (0, 1)$. 
Then the limit spectral distribution of the random matrix ${\bm F}$ is 
given by the free $F$-distribution $\mathrm{f}F(a,b)$.
}
\end{remark}

\subsection{The free $T$-distribution}
\label{sec:3_3}

For the free $F$-distribution $\mathrm{f}F(a,b)$, we consider the 
critical case that $a = 1$, which corresponds to the free analogue of 
one-dimensional $T^2$-distribution. Because in classical probability,
it is known that if a random variable $Y$ is distributed according to 
the $T$-distribution of degree of freedom $m$, then the random variable 
$X = Y^2$ is $F(1, m)$-distributed.

Hence it is natural to understand that the distribution of a symmetric 
random variable $Y$ such that $Y^2$ has the free $F$-distribution 
$\mathrm{f}F(1,m)$ is the free $T$ -distribution of degree of 
freedom $m$. 
Here the fact that $Y$ is symmetric means that all the odd moments 
$m_{2k - 1} (Y)$ $= E(Y^{2k-1})$, $k \ge 1$ vanish.


\begin{proposition} 
We assume $m > 1$, and let $Y$ be a symmetric random variable 
such that the distribution of 
$Y^2$ is the free $F$-distribution $\mathrm{f}F(1,m)$.
Then $G_Y (z)$, the Cauchy transform of the distribution of $Y$, is 
given by 
$$
   G_Y (z) = \frac{ (m + 1) z - \sqrt{(m - 1)^2 z^2 - 4 m^2  }}
                     {2 (m + z^2) }.
\eqno{(3)}
$$
\end{proposition}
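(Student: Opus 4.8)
The plan is to reduce the statement to the already-computed Cauchy transform of the free beta prime distribution in two steps: first determine $G_{Y^2}$, the Cauchy transform of $\mathrm{f}F(1,m)$, and then convert it into $G_Y$ by the standard symmetrization identity relating the law of a symmetric variable to that of its square.

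First I would compute $G_{Y^2}$. Since $\mathrm{f}F(a,b)$ is by definition the dilation $D_{\frac{b}{a}}(\mu_{a,b})$ of the free beta prime law, its Cauchy transform satisfies
$$
  G_{\nu_{a,b}} (z) = \frac{a}{\, b \,} \, G_{X(a,b)} \Big( \frac{a}{\, b \,} z \Big).
$$
Setting $a = 1$ and $b = m$ in formula $(1)$ --- where the terms $(1 - a)$ and $(a-1)^2$ vanish and $a b + a + b - 1$ collapses to $2m$ --- and substituting $z \mapsto z/m$, a short simplification gives
$$
  G_{Y^2} (z) = \frac{ (m + 1) z - \sqrt{(m - 1)^2 z^2 - 4 m^2 z}}{2 z (m + z)}.
$$
Because $a = 1$ forces the atom $\max(1 - a, 0)\, \delta_0$ to vanish, $\mathrm{f}F(1,m)$ is an absolutely continuous probability measure on a compact interval of the form $[0, \eta_{+}]$; hence a symmetric probability law $\mu_Y$ on $\mathbb{R}$ whose pushforward under $y \mapsto y^2$ equals this measure exists and is unique, so $Y$ is well-defined.

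Second I would establish the symmetrization identity $G_Y(z) = z\, G_{Y^2}(z^2)$. Writing $G_Y(z) = \int (z - y)^{-1}\, d\mu_Y(y)$ and pairing $y$ with $-y$ by symmetry of $\mu_Y$ yields
$$
  G_Y (z) = \int_{\mathbb R} \frac{z}{\, z^2 - y^2 \,}\, d\mu_Y (y) = z \int_{[0,\infty)} \frac{d \mu_{Y^2}(x)}{\, z^2 - x \,} = z\, G_{Y^2}(z^2),
$$
where $\mu_{Y^2}$ denotes the law of $Y^2$; equivalently this follows at the level of power series from the moment relations $m_{2k}(Y) = m_k(Y^2)$ together with the vanishing of the odd moments, which is precisely the symmetry hypothesis on $Y$.

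Finally I substitute: replacing $z$ by $z^2$ and multiplying by $z$, the radicand becomes $(m-1)^2 z^4 - 4 m^2 z^2 = z^2\big((m-1)^2 z^2 - 4 m^2\big)$, so the square root contributes a factor $z$ and one power of $z^2$ cancels against the denominator $2 z^2 (m + z^2)$, leaving exactly formula $(3)$. I expect the only genuine subtlety to be the bookkeeping of the square-root branches: one must check that the factor extracted in $\sqrt{z^2(\cdots)} = z\sqrt{\cdots}$ is the one compatible with the normalization $\lim_{z\to\infty} G_Y(z) = 0$ inherited from $G_{Y^2}$. Everything else is routine algebra.
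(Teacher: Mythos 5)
Your proposal is correct and follows essentially the same route as the paper: both reduce $G_{Y^2}$ to $G_{X(1,m)}$ via the dilation formula $G_{mX(1,m)}(z) = \frac{1}{m}G_{X(1,m)}(z/m)$, establish the symmetrization identity $G_Y(z) = z\,G_{Y^2}(z^2)$ (the paper via the moment relation $M_Y(z)=M_{Y^2}(z^2)$, you via the equivalent integral pairing), and then simplify. Your added remarks on the existence of the symmetric lift and on the square-root branch are sound but not substantively different from the paper's argument.
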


\begin{proof}
Let $M_Y (z)$ and $M_{Y^2}(z)$ be the moment generating 
functions of the random variables $Y$ and $Y^2$, respectively.
Since $Y$ is symmetric, these generating functions satisfy the relation
$$
   M_Y (z) = \sum_{\ell \ge 0} m_{\ell} (Y) z^{\ell}  
           = \sum_{k \ge 0} m_{2 k} (Y) z^{2 k}
           = \sum_{k \ge 0} m_{k} (Y^2) \big( z^2 \big)^k
           = M_{Y^2} (z^2),
$$
which implies the relation between the Cauchy transforms that 
$$
     \sfrac{1}{\, z \,} G_Y \Big( \sfrac{1}{\, z \,} \Big)
   = \sfrac{1}{\, z^2 \,} G_{Y^2} \Big( \sfrac{1}{\, z^2 \,} \Big),
\; \mbox{ equivalently } \; 
     G_Y ( z ) = z \, G_{Y^2} \big( z^2 \big).
$$
By the assumption the Cauchy transform $G_{Y^2} (z)$ can be 
obtained by one for the scaled free beta prime distributed random 
variable $m X(1,m)$
$$
   G_{m X(1,m)} (z) 
 = \sfrac{1}{\, m \,} \, G_{X(1,m)} \Big( \sfrac{z}{\, m \,} \Big),
$$
where $G_{X\!(1,m)}$ is the Cauchy transform of the free 
beta prime distribution $\mathrm{f}{\bm \beta}^{\prime}(1,m)$ 
given by (1) in Proposition 3.2.  Hence we have 
$$
 \begin{aligned}
  G_Y ( z ) & = \sfrac{z}{\, m \,} \, 
              G_{X_{1,m}} \Big( \sfrac{z^2}{\, m \,} \Big) 
              = \frac{z}{\, m \,} \, 
                \frac{ (m + 1) \sfrac{z^2}{\, m \,} 
                      - \sqrt{(m - 1)^2 \Big(\sfrac{z^2}{\, m \,}\Big)^2 
                      - 4 m \sfrac{z^2}{\, m \,}}}
                {2 \Big(\sfrac{z^2}{\, m \,} \Big)  
                \Big\{ 1 + \Big( \sfrac{z^2}{\, m \,} \Big) \Big\} } \\
            & = \frac{ \Big( \sfrac{m - 1}{\, m \,} \Big) z 
                   - \sqrt{\Big( \sfrac{m - 1}{\, m \,} \Big)^2 z^2 - 4}}
                     {2  \Big(1 + \sfrac{z^2}{\, m \,} \Big)} 
              = \frac{ (m + 1) z - \sqrt{(m - 1)^2 z^2 - 4 m^2  }}
                     {2 (m + z^2) } .
 \end{aligned}
$$
\end{proof} 

\medskip 

Applying Stieltjes inversion formula to $G_Y(z)$, we can easily obtain 
the probability measure of the random variable $Y$, which is absolutely 
continuous with respect to Lebesgue measure. 
This measure is our desired free $T$-distribution.

\smallskip 

\begin{definition}{\rm 
For $m > 1$, the free $T$-distribution $\mathrm{f}T(m)$ of 
parameter $m$ is the compactly supported probability measure on 
$\big[ - \sdfrac{2 m}{m - 1}, \sdfrac{2 m}{m - 1} \big]$
with the density 
$$
  f_{\mathrm{f}T} \big( x; m \big) = 
   \frac{ \sqrt{4 - \Big( \sfrac{m - 1}{\, m \,} \Big)^2 x^2}}
               {2 \pi \Big(1 + \sfrac{x^2}{\, m \,} \Big)}.
$$
}
\end{definition}

\smallskip 

\begin{remark}{\rm
The density function of the free $T$-distribution $\mathrm{f}T(m)$ 
has the following limits:
$$
\begin{aligned}
   \lim_{m \to \infty} f_{\mathrm{f}T} \big( x; m \big) 
   & = \sfrac{\sqrt{4 - x^2}}{2 \pi} & 
       \; & \mbox{(the standard semicircle law)},  \\
   \lim_{m \to \, 1 \; } f_{\mathrm{f}T} \big( x; m \big)
   & = \sfrac{1}{ \pi \big(1 + x^2 \big)} & 
       \; & \mbox{(the Cauchy distribution)}.
\end{aligned}
$$

In classical probability, it is known that the $T$-distribution $T(m)$ 
of the parameter $m$ becomes the standard Gaussian in the limit 
$m \to \infty$, and the Cauchy distribution can be obtained as the 
special case of $m=1$.

In the limit $m \to \infty$, the density of the free $T$-distribution 
$\mathrm{f}T(m)$ tends to the standard semicircle law, 
the free counterpart of the classical standard Gaussian. Since it is known 
from the theory of free stable laws \cite{BT99} that the free counterpart 
of the classical Cauchy distribution is given by the Cauchy distribution 
itself, hence, we can state that the density of the free $T$-distribution 
$\mathrm{f}T(m)$ becomes the free Cauchy distribution when $m$ goes to $1$.
Thus the above limits are compatible with those in classical probability.
}
\end{remark}

\smallskip

\subsection{The free beta distribution}
\label{sec:3_4}

In classical probability, it is well known that if $Y$ is distributed 
according to the beta distribution ${\bm \beta}(a,b)$ then 
$\sfrac{Y}{1-Y}$ is the beta prime 
${\bm \beta}^{\prime}(a, b)$-distributed random variable, or equivalently 
that if $X$ is distributed according to the beta prime distribution 
${\bm \beta}^{\prime}(a,b)$ then $\sfrac{X}{1 + X}$ is the 
beta ${\bm \beta}(a, b)$-distributed random variable.

\smallskip 

Based on this fact, it is natural to introduce the free beta 
distribution as follows:
Let $X$ be a self-adjoint random variable in a $C^*$-probability 
space distributed according to the free beta prime distribution 
$\mathrm{f}{\bm \beta}^{\prime}(a,b)$. Then we will regard the 
distribution of the reciprocal 
$B = \big( \bm{1} + X^{-1} \big)^{-1}$, as the free beta distribution. 

\smallskip 

For simplicity, we first deal with the case of $a > 1$ and $b > 1$ so that no 
atomic parts appear in 
$\mathrm{f}{\bm \beta}^{\prime}(a,b)$ or 
$\mathrm{f}{\bm \beta}^{\prime}(b,a)$.

\medskip


\begin{proposition}
We assume $a > 1$ and $b > 1$, and let $X$ be a free beta 
prime $\mathrm{f}{\bm \beta}^{\prime}(a,b)$-distributed 
self-adjoint random variable in a $C^*$-probability space. 
Then the Cauchy transform of the distribution of 
$B = \big( \bm{1} + X^{-1} \big)^{-1}$ is given by 
$$
 G_{B} (z) 
 = \frac{ (a + b - 2) z + (1 - a) - 
     \sqrt{(a + b)^2 z^2 - 2 (a b + a^2 -a + b) z + (a -1)^2}}
          {2 z (1 - z)},
\eqno{(4)}
$$
where the branch of the square root is chosen so that 
$\displaystyle{\lim_{z \to \infty} G_{B} (z) = 0}$.
\end{proposition}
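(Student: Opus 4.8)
The plan is to recognize that $B = (\bm 1 + X^{-1})^{-1}$ is \emph{not} a free convolution but merely a functional transform of the single self-adjoint variable $X$. On the invertible part one has $B = (\bm 1 + X^{-1})^{-1} = X(\bm 1 + X)^{-1} = g(X)$ with $g(x) = \frac{x}{1+x}$. Since $a > 1$ the law $\mathrm{f}{\bm \beta}^{\prime}(a,b)$ carries no atom, its support $[\gamma_{-},\gamma_{+}]$ lies in $(0,\infty)$ and is bounded away from the origin (indeed $\gamma_{-}>0$ when $a,b>1$), so $X$ is invertible and $g$ is strictly increasing and regular there. Hence the distribution of $B$ is simply the push-forward $g_{*}\mu_X$, and everything reduces to re-expressing $G_B(z) = \int \frac{d\mu_X(x)}{z - g(x)}$ through the known Cauchy transform $G_X$ of (1).

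First I would perform a partial-fraction step. Writing $z - g(x) = \frac{z + (z-1)x}{1+x}$ and decomposing $\frac{1+x}{(z-1)(x+c)}$ with $c = \frac{z}{z-1}$ as $\frac{1}{z-1} - \frac{1}{(z-1)^2(x+c)}$, integration against $\mu_X$ converts the second term into a value of $G_X$ via $\int \frac{d\mu_X(x)}{x+c} = -G_X(-c)$. The outcome is the clean functional relation
$$
G_B(z) = \frac{1}{z-1} + \frac{1}{(z-1)^2}\, G_X\!\left( \frac{z}{1-z} \right),
$$
valid for $z$ off the support of $B$; note that the argument $u = \frac{z}{1-z}$ is exactly $g^{-1}(z)$, as one expects when pushing a measure forward.

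Next I would substitute formula (1) for $G_X$ at $u = \frac{z}{1-z}$ and simplify. The useful identities are $1+u = \frac{1}{1-z}$ and $u(1+u) = \frac{z}{(1-z)^2}$, which clear the denominator $2u(1+u)$ of (1) and collapse the prefactors. The polynomial under the radical, $(b-1)^2 u^2 - 2(ab+a+b-1)u + (a-1)^2$, becomes $\frac{1}{(1-z)^2}$ times $(a+b)^2 z^2 - 2(ab+a^2-a+b)z + (a-1)^2$; verifying that the $z^2$-coefficient reorganizes to $(a+b)^2$ and the $z$-coefficient to $-2(ab+a^2-a+b)$ is the one genuinely computational point. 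Recombining the two terms over the common denominator $2z(1-z)$ then yields (4).

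The main obstacle I anticipate is not the algebra itself but the bookkeeping of the square-root branch across the substitution $u = \frac{z}{1-z}$, since $u \to -1$ (a removable singularity of $G_X$) rather than $\infty$ as $z\to\infty$. Instead of tracking the branch through the composition, I would fix the sign of the radical in (4) at the very end by imposing the stated normalization $\lim_{z\to\infty}G_B(z)=0$: expanding the candidate expression shows $G_B(z)\sim \frac{1}{z}$ precisely when $\sqrt{(a+b)^2 z^2 - \cdots} \sim (a+b)z$, which singles out the branch written in (4). As a consistency check I would confirm that the apparent pole at $z=1$ is removable — there the discriminant collapses to $(b-1)^2$ and the numerator vanishes — so that the absolutely continuous free beta law on a subinterval of $(0,1)$ emerges as expected.
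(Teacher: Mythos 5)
Your argument is correct and reaches (4), but it takes a genuinely different route from the paper. The paper first invokes the parameter-swap symmetry $X^{-1}\sim \mathrm{f}{\bm \beta}^{\prime}(b,a)$, so that $G_{X^{-1}}=G_{X(b,a)}$, and then applies the standard reciprocal formula $G_{W^{-1}}(z)=\tfrac{1}{z}-\tfrac{1}{z^{2}}G_{W}(\tfrac{1}{z})$ to the shifted variable $W={\bm 1}+X^{-1}$ (with $G_{W}(z)=G_{X(b,a)}(z-1)$), arriving at $G_{B}(z)=\tfrac{1}{z}-\tfrac{1}{z^{2}}G_{X(b,a)}\bigl(\tfrac{1}{z}-1\bigr)$. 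You instead treat $B=g(X)$ with $g(x)=x/(1+x)$ as a push-forward of the single variable $X$ and derive, by partial fractions inside the Cauchy integral, the direct relation $G_{B}(z)=\tfrac{1}{z-1}+\tfrac{1}{(z-1)^{2}}\,G_{X}\bigl(\tfrac{z}{1-z}\bigr)$; your substitution $u=z/(1-z)$ and the paper's $w=\tfrac{1}{z}-1$ are reciprocal to each other, which is exactly the inversion relating $G_{X}$ to $G_{X^{-1}}$, so the two computations are dual. What your approach buys is economy of prerequisites: you never need to identify the law of $X^{-1}$, only formula (1) itself, and your functional relation makes the removability of $z=1$ and the branch normalization transparent. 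What the paper's approach buys is reuse of two ready-made facts (the $S$-transform computation showing the parameter swap, and the textbook reciprocal formula), at the cost of a slightly longer chain of substitutions; both routes leave the same final branch-fixing step, which you and the paper both resolve by imposing $\lim_{z\to\infty}G_{B}(z)=0$. Your verifications of the discriminant coefficients $(a+b)^{2}$ and $-2(ab+a^{2}-a+b)$ and of $\gamma_{-}>0$ for $a,b>1$ are accurate.
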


\begin{proof}
We note that $X^{-1}$ is a free beta prime 
$\mathrm{f}{\bm \beta}^{\prime}(b,a)$-distributed random variable. 
Hence the Cauchy transform $G_{X^{-1}} (z)$ of the distribution of 
${X^{-1}}$ is given by the formula (1) in Proposition 3.2 
with exchanging $a$ with $b$, that is, 
$$
  G_{X^{-1}} (z) = G_{X(b, a)}(z).
$$

On the other hand, we easily find (see, for instance, 
\cite{BS10}) that if $W$ is a strictly positive random 
variable in a $C^*$-probability space with compact support, 
then $W$ is invertible and the Cauchy transform of $W^{-1}$ is 
given by the formula 
$$
   G_{W^{-1}} (z) = \frac{1}{\, z \,} 
      - \frac{1}{z^2} \,  G_W \Big( \frac{1}{\, z \,} \Big).
$$

\medskip 

Now we apply this formula to the random variable 
$W = \bm{1} + X^{-1}$ with 
$G_W (z) = G_{X(b, a)}(z - 1)$ since $W$ is the shift 
of $X^{-1}$ by $\bm{1}$. 

Combining the formulas above, we can have 
$$
 G_{B} (z) = \frac{1}{\, z \,} 
   - \frac{1}{z^2} \,  G_{X(b, a)} \Big( \frac{1}{\, z \,} - 1 \Big),
$$
which yields our desired formula. 
\end{proof}

\medskip 

We introduce the free beta distribution by the Cauchy 
transform $G_B (z)$. Although we have derived $G_B (z)$ under 
the condition $a > 1$ and $b > 1$, it can be found 
that $G_B (z)$ is still 
valid for $\big\{ (a, b) \, |  \,  
a >0, \, b > 0, \,\mbox{and} \, a + b > 1 \big\}$.

\smallskip 

Applying Stieltjes inversion formula, one can recover the probability 
measure as follows: 
Concerning the point mass of the measure, the function $G_{B} (z)$ 
has the simple poles at $z = 0$ and $z = 1$, and the residues 
are calculated as 
$$
   {\mathrm {Res}} \big( G_{B} (z) ; z = 0 \big) = \max (1 - a,0) 
  \; \mbox{ and } \; 
   {\mathrm {Res}} \big( G_{B} (z) ; z = 1 \big) = \max (1 - b,0),
$$
respectively. Thus the measure has a point masses $(1 - a)$ at 
$0$ if $0 < a < 1$ and $(1 - b)$ at $1$ if $0 < b < 1$.

\smallskip 

The density $f_{\mbox{\scriptsize f}{\bm \beta}}(x ; a, b)$ 
of the Lebesgue absolutely continuous part is given by 
$$
 \begin{aligned}
  f_{\mbox{\scriptsize f}{\bm \beta}}(x ; a, b) 
   = &  - \lim_{\varepsilon \to +0} \frac{1}{\, \pi \,} 
   \mathrm{Im} \big( G_{B} ( x + \varepsilon \mathrm{i}) \big) \\
   = & \frac{\sqrt{ - (a + b)^2 x^2 + 2 (a^2 + a b - a + b) x - (a -1)^2 }}
            {2 \pi x (1 - x)}, 
 \end{aligned}
$$
which is supported on the interval $[\kappa_{-}, \kappa_{+}]$, 
where 
$\displaystyle{ \kappa_{\pm} 
        = \bigg( \sfrac{\sqrt{a (a + b - 1) } \pm 
         \sqrt{b}}{a + b} \bigg)^{\!2}
}$ 
are two real roots of the quadratic equation
$ (a + b)^2 x^2 - 2 (a^2 + a b - a + b) x + (a -1)^2 = 0$ and 
satisfy $[\kappa_{-}, \kappa_{+}] \subseteq [0,1]$.

\smallskip 

\begin{definition}{\rm 
Let the parameters $a$ and $b$ satisfy $a >0$, $b > 0$, 
and $a + b > 1$. The free beta distribution 
${\mathrm{f}{\bm \beta}}(a, b)$ is the probability measure 
$$
 \begin{aligned}
  d \upsilon_{a,b} (x) = & 
       \frac{(a + b) \sqrt{- (x - \kappa_{+}) (x - \kappa_{-})} }
       {2 \pi x (1 - x)} \, I_{[\kappa_{-}, \kappa_{+}]}(x) \, dx \\
       & \qquad \qquad \quad + \max(1 - a, 0)\, \delta_0 (x)
                      + \max(1 - b, 0)\, \delta_1 (x),
 \end{aligned}
$$
where $\displaystyle{ \kappa_{\pm} 
        = \bigg( \sfrac{\sqrt{a (a + b -1)} \pm 
         \sqrt{b}}{a + b} \bigg)^{\!2}}$.
}
\end{definition}

\medskip

%
%
\section{The rationales of the free analogue}
\label{sec:4}

\subsection{The first rationale of the free analogue}
\label{sec:4_1}

The standard semi-circular distribution can be regarded as the 
free analogue of the standard normal distribution, and the 
square of a standard semi-circular element has the distribution of 
the free Poisson of parameter $1$.
Hence it is natural to regard the probability distribution 
of the freely independent sum of $m$-many squares of standard 
semi-circular elements as the free $\chi^2$-distribution of 
degree of freedom $m$, which is given by the free Poisson 
distribution of parameter $m$.

\smallskip

In classical probability, the $\chi^2$-distribution is in a class 
of gamma distributions, and the ratio of independent gamma distributed 
random variables gives the beta prime distribution. Indeed, 
if $X_1$ and $X_2$ are independent and distributed according to the 
gamma distributions $\Gamma(a, \theta)$ and $\Gamma(b, \theta)$, 
respectively, where both have the same scaling parameter $\theta$, 
then the random variable $X_1/X_2$ has the beta prime distribution 
${\bm \beta}^{\prime}(a, b)$.

\smallskip

From this standpoint, the free beta prime distribution introduced 
in the previous section can be regarded as the distribution of a 
ratio of freely independent free gamma distributed random variables 
because the free Poisson random variables have the free gamma 
distributions in the sense of free $\chi^2$-distributions.
This is the first naive rationale of our free analogue of the 
beta prime distribution.

\subsection{The score functions and the potentials}
\label{sec:4_2}
The second rationale is related to the score functions for the 
Fisher informations, in other words, the potentials of the diffusion 
processes. Here we will briefly recall the score functions both 
in the classical and the free cases.

\medskip 

In classical probability, for a probability measure $\mu$ with 
the differentiable density $f$, the function 
$$
   \rho_\mu (x) 
 = \dfrac{d}{d x} \big( \log f(x) \big) = \dfrac{f'(x)}{f(x)}
$$
is called the score function of $\mu$. 
Then the classical Fisher information $I(\mu)$ of $\mu$ (with 
respect to the location parameter) can be given by the square 
of the $L^2$-norm of the score function $||\rho_\mu||^2$ in 
$L^2 (d \mu)$, that is, 
$$
  I(\mu) = \int \Big( \sfrac{f'(x)}{f(x)} \Big)^2 \,d \mu(x)
         = \int \rho_\mu (x)^2 \,d \mu(x).
$$

One of the basic properties of the score function is the following 
Stein's relation: For a smooth function $p$, applying integration 
by parts, the relation 
$$
    \int p(x) \rho_\mu (x) \, d \mu (x)
  = - \int p'(x) \, d \mu (x).
$$
holds.

\medskip

On the other hand, the free analogues of entropy and Fisher 
information for self-adjoint non-commutative random variables 
were introduced and begun to study in Voiculescu's paper 
\cite{Vo93} (see, for survey, \cite{Vo02}). 
In the univariate case, 
the free Fisher information $\Phi(\mu)$ of a compactly supported 
probability measure $\mu$ is given by 
$$
  \Phi(\mu) = \int \! \Big( 2 \, \hilb{f}(x) \Big)^2 \, d \mu(x), 
$$
where $\hilb{f}(x)$ is the ($\pi$-multiplicated) Hilbert transform 
of $f$ defined by the principal value integral 
$$
  \hilb{f}(x) = {\mathrm{p.v.}} 
                \! \int \! \frac{f (y) }{ x - y } \, dy 
  = \lim_{\varepsilon \to 0} 
           \bigg(  \int_{-\infty}^{x - \varepsilon} 
           \! \! + \! 
           \int_{x + \varepsilon}^{\infty}
           \frac{f(y)}{x - y} \, dy \bigg).
$$

By the expression of the free Fisher information, it is natural to 
understand that the double of the Hilbert transform $2 \hilb{f} (x)$ 
corresponds to a free analogue of the classical score function, 
because the square of the $L^2$-norm 
$|| 2 \hilb{f} ||^2$ in $L^2 (d \mu)$ 
is the free Fisher information of $\mu$. 

Indeed the function $2 \hilb{f} (x)$ satisfies the following identity
(see, for instance, \cite{NY14}):
Let $\mu$ be a compactly supported probability 
measure on ${\mathbb R}$ with continuous density $f$, and assume 
that $\mu$ has finite free Fisher information. 
Then, for a continuously differentiable function $\eta$ on 
${\mathbb R}$, we have  
$$
     \int_{\mathcal{S}} \! 
      \eta(x) \Big( 2 \, \hilb{f}(x) \Big) \, d \mu(x)
   = \iint_{{\mathcal{S}} \times {\mathcal{S}}} \! \! \! 
           \frac{\eta(x) - \eta(y)}{x - y} \, d \mu(x) \, d \mu(y),
$$
where ${\mathcal{S}}={\mathrm{Supp}}(\mu)$, which can be regarded as 
the free counterpart of the classical Stein's relation because 
the difference quotient $D \eta = \sfrac{\eta(x) - \eta(y)}{x - y}$
works as non-commutative derivative.

\medskip 

Comparing the free Stein's relation with the classical one, one can 
find that the sign of the free score function is opposite to 
the classical one, which is, however, compatible from the viewpoint 
of the potentials in diffusion processes.

\smallskip

For the function $V \in C^1 ({\mathbb R})$, 
$$
  g(x) = \sdfrac{1}{\mathcal{Z}} \, \exp \big( - V(x) \big), 
$$
is called the Gibbs distribution of the potential $V$, 
which is obtained as the long-time asymptotically stationary 
distribution for the diffusion process on ${\mathbb{R}}$ 
with the drift potential $V$. It is clear that the classical score 
function of the Gibbs distribution $g$ is given 
by $\dfrac{g'(x)}{g(x)} = - V'(x)$.

\smallskip

On the other hand, in the free context, Biane and Speicher 
 (see \cite{Bi03} and \cite{BiS01}) investigated the free analogue of 
diffusion process via random matrix models. They derived 
that the long-time asymptotically stationary 
measure $\nusV$ for the free diffusion process with the potential 
$V \in C^1 ({\mathbb R})$ is characterized by the Euler-Lagrange equation
$$
   \big( {\mathcal H} g \big) (x) = \onehalf V' (x) \; \;
   \mbox{ on } {\mathrm{Supp}}(\nusV),
$$
where $g$ is the compactly supported density $g(x) dx = d \nusV (x)$.

This stationary measure $\nusV$ is called the equilibrium 
measure for the free diffusion process with the potential $V$. 
It is obvious that the free score function of the equilibrium 
measure $\nusV$ is given by 
$2 \big({\mathcal H} g \big) (x) = V'(x)$ just like with 
the Gibbs distribution.

\smallskip

The classical and the free score functions mentioned above, 
namely, the derivative of the potential $V'(x)$ gives the
second rationale of our free analogues.

\subsection{The second rationale of the free analogue}
\label{sec:4_3}

We shall see that the free beta prime distribution and the free 
analogues of the classical $T$ and beta distributions derived in 
the previous section can be characterized by exactly the 
same (up to constant) potentials for the Gibbs forms of 
corresponding classical distributions.

\smallskip

The following formula is helpful for us to find the 
($\pi$-multiplicated) Hilbert transform of the probability 
measure on ${\mathbb{R}}$ (see, for instance, Chapter 3 in \cite{HP00}): 
For a compactly supported probability measure $\mu$ on ${\mathbb{R}}$, 
the ($\pi$-multiplicated) Hilbert transform $\hilb{\mu}$ can be 
obtained by the formula, 
$$
   \hilb{\mu}(x) = 
   \lim_{\varepsilon \to +0} 
   {\mathrm {Re}} \big( G_{\mu}(x + \varepsilon {\mathrm i}) \big),
$$
where $G_\mu$ is the Cauchy transform of $\mu$.

\medskip 

We shall list the corresponding potentials of our free analogues below.

\noindent
(i) {\it The free beta prime distribution}:
\begin{itemize}
\item[$\bullet$]{
The density of the classical beta prime distribution 
${\bm \beta}^\prime (a, b)$ and its potential: 
$$
 \begin{aligned}
      & \sfrac{1}{B(a,b)} \, x^{a - 1} (1 + x)^{- a - b} \\
    = & \sfrac{1}{B(a,b)} \,
      \exp \Big\{ -\big((1 - a) \log x + (a + b) \log (1 + x) \big) \Big\}
      = \exp \big( - V_{{\bm \beta}^\prime (a, b)} (x) \big)
 \end{aligned}
$$
for $x > 0$, where ${B}$ is the beta function.  }
\item[$\bullet$]{
The derivative of the potential: 
$$
 V_{{\bm \beta}^\prime (a, b)}^{\prime} (x) 
    = \frac{1 - a}{x} + \frac{a + b}{1 + x}
           = \frac{(b + 1) x + (1 - a)}{x (1 + x)}.
$$
}
\item[$\bullet$]{
The free score function of the free beta prime distribution
${\mathrm{f}{\bm \beta}^{\prime} (a,b)}$: \\
\indent
The Cauchy transform of ${\mathrm{f}{\bm \beta}^{\prime} (a,b)}$ 
is given by 
(1) in Proposition 3.2 
and its free score function is 
$$
  2 \hilb{f_{\mathrm{f}{\bm \beta}^{\prime} (a,b)}}(x)
   = \frac{(b + 1) x + (1 - a)}{x (1 + x)} 
   = V_{{\bm \beta}^\prime (a, b)}^{\prime} (x).
$$
}
\end{itemize}
\noindent
(ii) {\it The free $T$-distribution}:
\begin{itemize}
\item[$\bullet$]{
The density of the classical $T$-distribution $T(m)$ and its potential: 
$$
 \begin{aligned}
   & \sfrac{1}{\sqrt{m} B \big( \frac{1}{\, 2 \,}, \frac{m}{\, 2 \,} \big)} 
     \Big( \,  1 + \sfrac{x^2}{\, m \,} \,  \Big)^{-\frac{m + 1}{2}} \\
 = & \sfrac{1}{\sqrt{m} B \big( \frac{1}{\, 2 \,}, \frac{m}{\, 2 \,} \big)} 
     \exp \Big\{ - \Big( \sfrac{m + 1}{2} \Big) 
     \log \Big( \, 1 + \sfrac{x^2}{\, m \,} \, \Big) \Big\}
 = \exp \big( - V_{T(m)} (x) \big).
 \end{aligned}
$$
}
\item[$\bullet$]{
The derivative of the potential: 
$$
  V_{T(m)}^{\prime} (x) = \Big( \sfrac{m + 1}{2} \Big) \cdot 
                     \Big(  1 + \sfrac{x^2}{\, m \,} \Big)^{\!-1} \cdot 
                     \Big( \sfrac{2 x}{m} \Big) 
                   = \frac{(m + 1) x}{ m + x^2}.
$$
}
\item[$\bullet$]{
The free score function of the free $T$-distribution
${\mathrm{f} T(m) }$: \\
\indent
The Cauchy transform of ${\mathrm{f} T(m)}$ is given by 
(3) in Proposition 3.6 
and its free score function is 
$$
  2 \hilb{f_{\mathrm{f}T(m)}}(x)
   = \frac{(m + 1) x}{ m + x^2}
   = V_{T(m)}^{\prime} (x).
$$
}
\end{itemize}
\noindent
(iii) {\it The free beta distribution}:
\begin{itemize}
\item[$\bullet$]{
The density of the classical beta distribution ${\bm \beta}(a, b)$ and 
its potential: 
$$
 \begin{aligned}
      & \sfrac{1}{B(a,b)} \, x^{a - 1} (1 - x)^{b - 1} \\
    = & \sfrac{1}{B(a,b)} \,
      \exp \Big\{ -\big((1 - a) \log x + (1 - b) \log (1 - x) \big) \Big\}
      = \exp \big( - V_{{\bm \beta}^\prime (a, b)} (x) \big)
 \end{aligned}
$$
for $0 < x < 1$, where ${B}$ is the beta function. 
}
\item[$\bullet$]{
The derivative of the potential: 
$$
 V_{{\bm \beta} (a, b)}^{\prime} (x) 
    = \frac{1 - a}{x} - \frac{1 - b}{1 - x}
           = \frac{(a + b - 2) x + (1 - a)}{x (1 - x)}.
$$
}
\item[$\bullet$]{
The free score function of the free beta distribution 
${\mathrm{f}{\bm \beta} (a,b)}$: \\
\indent
The Cauchy transform of ${\mathrm{f}{\bm \beta} (a,b)}$ is given by 
(4) in Proposition 3.9 
and its free score function is 
$$
  2 \hilb{f_{\mathrm{f}{\bm \beta} (a,b)}}(x)
   = \frac{(a + b - 2) x + (1 - a)}{x (1 - x)} 
   = V_{{\bm \beta}(a, b)}^{\prime} (x).
$$
}
\end{itemize}

\begin{remark}{\rm
Concerning the second rationale, it should be noted that 
Hasebe and Szpojankowski pointed out such a correspondence 
between the classical and the free distributions in \cite{HS18}
based on the maximization problem of the entropy functionals 
with an external potential $V$.

In particular, they showed the correspondence between 
the classical and the free generalized inverse Gaussian 
distributions. They also mentioned the maps from the classical 
Gaussian to the semicircle and from the classical gamma to the 
free Poisson. 
Hence the free analogue of the classical distributions derived 
in the previous section can be appended as new examples.
}
\end{remark}

\medskip

%
%
\section{The combinatorial representation of the moments}
\label{sec:5}

\subsection{Non-crossing linked partitions and the Motzkin paths}
\label{sec:5_1}

In order to describe the moments in combinatorial way, we shall 
use the set partitions.
For the set $[n] = \{ 1,2, \ldots , n \}$, a partition of 
$[n]$ is a collection $\pi =$ $\{ B_1, B_2,$ $\ldots ,$ $ B_k \}$ 
of non-empty disjoint subsets of $[n]$ which are called blocks
and whose union is $[n]$. 
For a block $B$, we denote by $|B|$ the size of the block $B$, 
that is, the number of the elements in the block $B$. 
A block $B$ will be called singleton if $|B|=1$. 

\smallskip 

We say two blocks $B_i$ and $B_j$ in $\pi$ are crossing if 
there exist elements $b_1, b_2 \in B_i$, $c_1, c_2 \in B_j$ 
such that $b_1 < c_1 < b_2 < c_2$. The blocks $B_i$ and $B_j$ are 
said to be non-crossing if they are not crossing. 
A partition $\pi$ is called non-crossing if its blocks 
are pairwise non-crossing. We denote the set of all non-crossing 
partitions of the set $[n]$ by $\mathcal{NC} (n)$. The notion of 
non-crossing partition was first introduced in \cite{Kr72}.  
For more about non-crossing partitions, see the survey of Simion 
\cite{Si00}. 

\smallskip 

In the context of free probability, Dykema introduced a new 
structure of partitions, the non-crossing linked partitions 
in \cite{Dy07} (see also \cite{Ni10}), which can be regarded as 
a non-crossing partition having some links between blocks with 
certain restrictions. The restricted 
link between blocks introduced by Dykema in \cite{Dy07} 
is as follows:
\begin{itemize}
\item[]{
Let $E$ and $F$ be subsets of $[n]$, We say that 
$E$ and $F$ are nearly disjoint if for every $i \in E \cap F$, 
one of the following holds:
$$
 \begin{aligned}
  \mbox{(a)} \; & i = \min(E), \, |E| > 1 \; \mbox{and} \; i \ne \min(F), 
        \qquad \qquad \qquad \qquad \qquad \qquad \qquad \qquad \qquad  \\
  \mbox{(b)} \; & i \ne \min(E), \, i = \min(F) \; \mbox{and} \; |F| > 1. 
 \end{aligned}
$$
}
\end{itemize}

\smallskip 

He derived the structure of non-crossing linked partitions 
in his study on the multiplicative free convolution and 
the $T$-transform.

\begin{definition}{\rm 
A non-crossing linked partition of $[n] = \{ 1, 2, \ldots, n \}$ 
is a collection $\pi$ of non-empty subsets of $[n]$ whose union 
is $[n]$, and any two distinct elements of $\pi$ are non-crossing and 
nearly disjoint. We denote by $\mathcal{NCL}(n)$ the set of 
all non-crossing linked partitions of $[n]$. 

For $\pi \in \mathcal{NCL}(n)$, although elements of $\pi$ are not 
disjoint in general, we refer to an element of $\pi$ as a 
block of $\pi$.
}
\end{definition}

Here, we recall some terminologies and the basic properties  
of non-crossing linked partitions observed in \cite{Dy07}.

\begin{remark}{\rm
\noindent
\vspace{-5pt}
\begin{itemize}
\item[]{
  \begin{itemize}
    \item[(1)] 
    {Any given $k \in [n]$ belongs to either exactly one block or 
     exactly two blocks; we will say $k$ is singly or 
     doubly covered by $\pi$, accordingly.}
    \item[(2)]
     {The elements $1$ and $n$ are singly covered by $\pi$.}
    \item[(3)]
     {Any two distinct elements $E$ and $F$ of $\pi$ have at most one 
      element in common. Moreover, if $E \cap F \ne \phi$ 
    (that is, $| E \cap F | = 1$), then both $E$ and $F$ have at least
     two elements.}
  \end{itemize}
}
\end{itemize}
}
\end{remark}

\smallskip

One of graphical representations of non-crossing linked partitions has 
been described in \cite{Dy07}, which is a modification of the usual 
pictures of non-crossing partitions in the following way:
The non-crossing partitions 
$\pi \in \mathcal{NC}(n) \subseteq \mathcal{NCL}(n)$ are drawn in the usual 
way with all angles being right angles. 
Suppose $\pi \in \mathcal{NCL}(n) \! \setminus \! \mathcal{NC}(n)$. 
If $E, F \in \pi$ with $E \ne F$, and if $E \cap F = {\min(E)}$, then the 
line connecting $\min(E)$ to the next element in $E$ is started with 
diagonal line, that is, the diagonally started lines indicate the links.

\begin{example}{\rm
 The non-crossing linked partition 
\vspace{-5pt}
$$
  \pi = \big\{ \{ 1, 2, 7 \}, 
               \{ 2, 4 \}, 
               \{ 3 \}, 
               \{ 5, 6 \}, 
               \{ 7, 8, 9 \}, 
               \{ 9, 10 \} \big\}
\vspace{-5pt}
$$
has the graphical representation,
$$
\setlength{\unitlength}{1.5pt}
\begin{picture}(170, 30)(0,0)
  \multiput(15,10)(15,0){10}{\circle*{2}}
  \put( 13, 0){\small  $1$}
  \put( 28, 0){\small  $2$}
  \put( 43, 0){\small  $3$}
  \put( 58, 0){\small  $4$}
  \put( 73, 0){\small  $5$}
  \put( 88, 0){\small  $6$}
  \put(103, 0){\small  $7$}
  \put(118, 0){\small  $8$}
  \put(133, 0){\small  $9$}
  \put(147, 0){\small $10$}
  \thicklines
  \put( 15, 10){\line(0,1){15}}
  \put( 30, 10){\line(0,1){15}}
  \put(105, 10){\line(0,1){15}}
  \put( 15, 25){\line(1,0){90}}
%
  \put( 30, 10){\line(3,2){15}}
  \put( 45, 20){\line(1,0){15}}
  \put( 60, 10){\line(0,1){10}}
%
  \put( 75, 10){\line(0,1){10}}
  \put( 90, 10){\line(0,1){10}}
  \put( 75, 20){\line(1,0){15}}
%
  \put(105, 10){\line(1,1){15}}
  \put(120, 10){\line(0,1){15}}
  \put(135, 10){\line(0,1){15}}
  \put(120, 25){\line(1,0){15}}
%
  \put(135, 10){\line(1,1){15}}
  \put(150, 10){\line(0,1){15}}
%
\end{picture}
$$
There are three doubly covered elements, $2$, $7$, and $9$.
}
\end{example}

\smallskip

In \cite{CWY08}, Chen, Wu, and Yan proposed another graphical representation 
of non-crossing linked partitions, called the linear representation, 
which is defined as follows:  Given a non-crossing linked partition $\pi$ of 
$[n]$, list $n$ vertices in a horizontal line with labels $1$, $2$, $\ldots$, 
$n$. For each block $E = \{i_1, i_2, \ldots, i_\ell \}$ of $\pi$ with 
$i_1 = \min (E)$ and $\ell \ge 2$, draw an arc between $i_1$ and $i_j$ for each 
$j = 2, \ldots , \ell$, where we should always put the arc $(i, j)$ above 
the arc $(i, k)$ if $j > k$. 

Using the linear representation, they constructed a bijection between 
non-crossing linked partitions and Schr{\"o}der paths and derived various 
enumerative results on non-crossing linked partitions 
(see, for details, \cite{CWY08}). 

\medskip 

Here we shall introduce another graphical representation of 
non-crossing linked partitions, namely, the card arrangements 
which is essentially the same as the above graphical representation 
of Dykema, but we should much more consider the heights of lines 
in order to reveal the relation between non-crossing linked 
partitions and the Motzkin paths.

\begin{definition}{\rm 
A Motzkin path of length $n$ is a non-negative lattice path 
from $(0, 0)$ to $(n, 0)$ in the integer lattice 
${\mathbb Z}_{\ge 0} \times {\mathbb Z}_{\ge 0}$ 
consisting of three types of steps:

\smallskip 

\quad 
  $u = (1,  1)$:  {\it up} step, \; 
  $d = (1, -1)$:  {\it down} step, \; 
  $t = (1,  0)$:  {\it transit} step.
}
\end{definition}

The set partitions are closely related to the Motzkin paths. 
Indeed Flajolet investigated the correspondence between all 
partitions of $n$ elements and colored (integer labeled) Motzkin 
paths of length $n$ in \cite{Fl80}.

\smallskip 

In the paper \cite{YY07}, the representation of non-crossing partitions 
by cards arrangements associated with the Motzkin paths was shown, 
which was the similar technique to the juggling patterns in \cite{ER96} but 
they were required to prepare different kinds of cards. 
Namely, they used {\it the opening}, {\it the closing}, 
{\it the intermediate}, and {\it the singleton} cards in order to 
represent non-crossing partitions. 
We list these cards below and illustrate the representation of 
non-crossing partitions by cards again for our convenience. 

\medskip

\noindent
{\it The opening cards}: 

The opening card $O_i \; (i = 0,1,2, \ldots)$ has $i$ inflow lines from 
the left and $(i\!+\!1)$ outflow lines to the right, where one new line 
starts from the middle point on the ground level. 
For each $j \ge 1$, the inflow line of the $j$th level goes through out 
to the $(j\!+\!1)$st level without any crossing. 
The card $O_i$ is called {\it the opening card of level} $i$.
$$
 \begin{array}{llllll}
    \quad \mbox{Level } 0 & \quad \mbox{Level } 1 & \quad \mbox{Level } 2 & 
     & \mbox{Level } i & \\
       \quad \opnzer{}{O_0}{} \quad 
     & \quad \opnone{}{O_1}{} \quad 
     & \quad \opntwo{}{O_2}{} 
     & \dumcdots
     &       \opnnth{}{O_i}{} \rsideihibig{i}
     & \dumcdots
 \end{array} 
$$
The opening card represents the minimal element of a block of 
non-singleton. 

\medskip

\noindent
{\it The closing cards} :

The closing card $C_i \; (i = 1,2,3, \ldots)$ has $i$ inflow 
lines from the left and $(i\!-\!1)$ outflow lines to the right.  
On the card $C_i$, only the line of the lowest level goes 
down to the middle point on the ground level and ends.
For each $j \ge 2$, the inflow line of the $j$th level goes 
through out to the $(j\!-\!1)$st level without any crossing. 
The card $C_i$ is called {\it the closing card of level} $i$.
$$
 \begin{array}{llllll}
    \quad \mbox{Level } 1 & \quad \mbox{Level } 2 & \quad \mbox{Level } 3 &   
     &  \mbox{Level } i &  \\
       \quad \clsone{}{C_1}{} \quad 
     & \quad \clstwo{}{C_2}{} \quad 
     & \quad \clsthr{}{C_3}{} 
     & \dumcdots     
     &       \clsnth{}{C_i}{} \rsideilo{i-1}
     & \dumcdots 
 \end{array}
$$
The closing card represents the maximal element in a 
block of non-singleton. 

\medskip

\noindent
{\it The intermediate cards} : 

The intermediate card $I_i \; (i = 1,2,3, \ldots)$ has $i$ inflow 
lines and the same number of outflow lines.  Only the line of the 
lowest level goes down to the middle point on the ground and continues 
its flow as the lowest line again. The rest of inflow lines maintain
their levels.  
We call the card $I_i$ {\it the intermediate card of level} $i$.
$$
 \begin{array}{llllll}
    \quad \mbox{Level } 1 & \quad \mbox{Level } 2 & \quad \mbox{Level } 3 & 
     & \mbox{Level } i & \\
       \quad \imdone{}{I_1}{} \quad 
     & \quad \imdtwo{}{I_2}{} \quad 
     & \quad \imdthr{}{I_3}{} 
     & \dumcdots
     &       \imdnth{}{I_i}{} \rsideihi{i-1}
     & \dumcdots 
 \end{array}
$$
The intermediate card represents the intermediate (neither the 
minimal nor the maximal) element of a block of size $\ge 3$.

\medskip

\noindent
{\it The singleton cards} :

The singleton card $S_i \; (i = 0,1,2, \ldots)$ has $i$ 
horizontally parallel lines and the short pole at the middle point 
on the ground.  
We call the card $S_i$ {\it the singleton card of level} $i$, which 
represents, of course, a singleton.
$$
 \begin{array}{llllll}
    \quad \mbox{Level } 0 & \quad \mbox{Level } 1 & \quad \mbox{Level } 2 & 
     & \mbox{Level } i &  \\
       \quad \sglzer{}{S_0}{} \quad 
     & \quad \sglone{}{S_1}{} \quad 
     & \quad \sgltwo{}{S_2}{} 
     & \dumcdots 
     &       \sglnth{}{S_i}{} \rsideilo{i}
     & \dumcdots
 \end{array}
$$

Associated with a Motzkin path, we arrange the above cards according to the 
following rule:

\smallskip 

\noindent
{\it The rule of the arrangements for ${\mathcal{NC}}(n)$} 

Let $\mathbf{p} = \big( s_1, s_2, ... , s_n \big)$ be a Motzkin path  
of length $n$ where $s_j \in \{ u, d, t \}$, and denote by $y_j$ 
the height of the step $s_j$ starting.
\begin{itemize}
\setlength{\leftskip}{7mm}
\item[(1)]{ 
In case of $s_j = u$, {\it up} step (resp. $s_j = d$, {\it down} step), 
we put the opening (resp. closing) card of level $y_j$ at the 
$j$th site. 
}
\item[(2)]{ 
In case of $s_j = t$, {\it transit} step, if the height $y_j \ge 1$ 
(not at ground level) then two cards are available, namely, the 
intermediate card of level $y_j$ or the singleton card of level $y_j$,
but if $y_j = 0$ (at the ground level) then we have to put 
$S_0$, the singleton card of level $0$, at the $j$th site.
}
\end{itemize}

\smallskip 

The card arrangements constructed by the rule above are called 
the admissible arrangements. Each Motzkin path yields not only one 
admissible arrangement in general but each admissible arrangement 
determines the non-crossing partition of $[n]$ 
uniquely, the blocks of which are constituted from the connected curves 
in the pattern on the admissible arrangement 
(see, for more details, \cite{YY07}). 

\smallskip 

Now we shall extend the above representation to the case of non-crossing 
{\it linked} partitions. To this end, we introduce some more cards 
which will represent the doubly covered elements. 

Before making the cards, we classify the doubly covered elements 
into two types.
It is from the definition that a doubly covered element is 
contained in two blocks and the minimal element of one or 
the other. 
Let $k$ be a doubly covered element such that $k \in E \cap F$ with 
$k = \min(F)$. 

\begin{itemize}
\setlength{\leftskip}{7mm}
\item[(i)] {If $k = \max(E)$, then we call $k$ {\it the doubly 
            covered element of type} I. } 
\item[(ii)] {Otherwise, in the case where $k \ne \max(E)$, that is, 
            $k$ is an intermediate element of $E$ ($k \ne \min(E)$ 
            follows by definition) then $k$ is said to be 
            {\it the doubly covered element of type} II}. 
\end{itemize}
Here we will construct the cards for the doubly covered 
elements according to these types.

\medskip 

\noindent
{\it The cards for doubly covered elements of type} I :

\smallskip

We shall make the cards $T_i \; (i = 1,2,3, \ldots)$ for 
doubly covered elements of type I. 
The card $T_i$ has $i$ inflow lines and the same number of 
outflow lines. 
Only the inflow line of the lowest level goes down to the middle point on 
the ground and ends, which indicates the end of a block. 
But immediately a new line starts with $\pi/4$ - slope from the same middle 
point on the ground level and will be the outflow line of the 
lowest level. 
The rest of inflow lines maintain their levels.  
$$
 \begin{array}{llllll}
    \quad \mbox{Level } 1 & \quad \mbox{Level } 2 & \quad \mbox{Level } 3 & 
     & \mbox{Level } i &  \\
       \quad \trsone{}{T_1}{} \quad 
     & \quad \trstwo{}{T_2}{} \quad 
     & \quad \trsthr{}{T_3}{} 
     & \dumcdots
     &       \trsnth{}{T_i}{} \rsideihi{i-1}
     & \dumcdots 
 \end{array}
$$
On the card $T_i$, the $\pi/4$ - slope indicates the beginning of a new 
block, which means a doubly covered element of type I.

\medskip 

\noindent
{\it The cards for doubly covered elements of type} II :

\smallskip

We prepare the cards $U_i \; (i = 1,2,3, \ldots)$ for doubly covered 
elements of type II. The card $U_i$ has $i$ inflow lines from  
the left and $(i\!+\!1)$ outflow lines to the right, where only the inflow 
line of the lowest level goes down to the middle point on the ground and 
continues its flow as the second (not the lowest) level again. This  
connected flow indicates an intermediate element.
Immediately a new line starts with $\pi/4$ - slope from the same middle 
point on the ground level and will be the outflow line of the lowest level, 
which indicates the beginning of a new block and, hence, the card $U_i$ 
represents a double covered element of type II. 
$$
 \begin{array}{llllll}
    \quad \mbox{Level } 1 & \quad \mbox{Level } 2 & \quad \mbox{Level } 3 & 
     & \mbox{Level } i &  \\
       \quad \updone{}{U_1}{} \quad 
     & \quad \updtwo{}{U_2}{} \quad 
     & \quad \updthr{}{U_3}{} 
     & \dumcdots 
     &       \updnth{}{U_i}{} \rsideihibig{i}
     & \dumcdots 
 \end{array}
$$

\medskip 

We shall give the representation of non-crossing {\it linked} partitions by 
the arrangements of cards, which is almost the same as for 
non-crossing partitions but more cards are available at some steps.  

\smallskip 

\noindent
{\it The rule of the arrangements for ${\mathcal{NCL}}(n)$} 

Let $\mathbf{p} = \big( s_1, s_2, ... , s_n \big) \;$ $s_j \in \{ u, d, t \}$ 
be a Motzkin path of length $n$ and denote by $y_j$ the height of the 
step $s_j$ starting. Associated with the Motzkin path, we shall arrange the 
cards along with the following rule:
\begin{itemize}
\setlength{\leftskip}{7mm}
\item[(1)]{ 
In case of $s_j = u$ ({\it up} step), if the height $y_j = 0$ 
(at ground level) 
then we put the opening card of level $0$, and if $y_j \ge 1$ (not at ground 
level) then we put the card for doubly covered elements of type II or the 
opening card of level $y_j$ at the $j$th site, that is, two cards are 
available.
}
\item[(2)]{ 
In case of $s_j = t$ ({\it transit} step), if the height $y_j \ge 1$ (not at 
ground level) then three cards are available, namely, doubly covered 
elements of type I of level $y_j$, or the intermediate card of level $y_j$, 
or the singleton card of level $y_j$, but if $y_j = 0$ (at ground level) 
then we have to put the singleton card of level $0$ at the $j$th site, that 
is, there is no possibility other than $S_0$.
}
\item[(3)]{ 
If $s_j = d$ ({\it down} step), then we put the closing card of level $y_j$
at the $j$th site, which is unique possibility. 
}
\end{itemize}

\medskip 

We again call the card arrangements constructed by the new rule above 
the admissible arrangements again.  Of course, the number of the new 
admissible arrangements are rather increased compared with the case of 
non-crossing partitions. 
Similar to the case of non-crossing partitions, each admissible arrangement 
determines the non-crossing linked partition uniquely, the blocks of which 
are constituted from the connected lines in the pattern on the admissible 
arrangement, where we regard that the lines starting with the 
$\pi/4$ - slope are not connected to curved lines at doubly covered 
elements.

Conversely, it can be said that every non-crossing linked partition 
is represented as above admissible arrangement (recall the graphical 
representation of Dykema in Example 5.3).

\medskip 

\begin{example}{\rm
If the Motzkin path $\mathbf{p} = (u,u,t,d,d)$, 
$$
\setlength{\unitlength}{1.0pt}
\begin{picture}(120,80)(0,0)
 \thinlines
  \multiput(20,20)(20,0){6}{\line(0,1){20}}
  \multiput(20,40)(20,0){6}{\line(0,1){20}}
  \multiput(20,60)(20,0){6}{\line(0,1){20}}
  \multiput(20,20)(20,0){5}{\line(1,0){20}}
  \multiput(20,40)(20,0){5}{\line(1,0){20}}
  \multiput(20,60)(20,0){5}{\line(1,0){20}}
  \multiput(20,80)(20,0){5}{\line(1,0){20}}
  \put( 30,10){\footnotesize $u$}
  \put( 50,10){\footnotesize $u$}
  \put( 70,10){\footnotesize $t$}
  \put( 90,10){\footnotesize $d$}
  \put(110,10){\footnotesize $d$}
  \put(30, 0){\footnotesize $1$}
  \put(50, 0){\footnotesize $2$}
  \put(70, 0){\footnotesize $3$}
  \put(90, 0){\footnotesize $4$}
  \put(110,0){\footnotesize $5$}
  \thicklines
  \put( 20,20){\line(1, 1){20}}
  \put( 40,40){\line(1, 1){20}}
  \put( 60,60){\line(1, 0){20}}
  \put( 80,60){\line(1,-1){20}}
  \put(100,40){\line(1,-1){20}}
 \thinlines
\end{picture}
$$
then we obtain $6$ admissible arrangements because at 
the second step $u$, we can use one of two cards $O_1$, $U_1$,
and at the third step $t$, three cards $I_2$, $S_2$, $T_2$ are 
available. We shall list the $6$ admissible arrangements and the 
corresponding non-crossing liked partitions below:
$$
  \begin{aligned}
    &(1) \; \; \pi = \big\{ \{1, 5 \}, \{ 2, 3, 4 \} \big\} 
  & &(2) \; \; \pi = \big\{ \{1, 2, 5 \}, \{ 2, 3, 4 \} \big\}  \\
    & \mbox{$ 
             \opnzer{}{1}{O_0}
             \opnone{}{2}{O_1}
             \imdtwo{}{3}{I_2}
             \clstwo{}{4}{C_2}
             \clsone{}{5}{C_1} 
            $}
    & \quad \;
    & \mbox{$ 
             \opnzer{}{1}{O_0}
             \updone{}{2}{U_1}
             \imdtwo{}{3}{I_2}
             \clstwo{}{4}{C_2}
             \clsone{}{5}{C_1}
            $}               
  \end{aligned}
$$

$$
  \begin{aligned}
    &(3) \; \; \pi = \big\{ \{1, 5 \}, \{ 2, 4 \}, \{ 3 \} \big\}  
  & &(4) \; \; \pi = \big\{ \{1, 2, 5 \}, \{ 2, 4 \}, \{ 3 \}  \big\}      \\
    & \mbox{$ 
             \opnzer{}{1}{O_0}
             \opnone{}{2}{O_1}
             \sgltwo{}{3}{S_2}
             \clstwo{}{4}{C_2}
             \clsone{}{5}{C_1}
            $}
    & \quad \;
    & \mbox{$ 
             \opnzer{}{1}{O_0}
             \updone{}{2}{U_1}
             \sgltwo{}{3}{S_2}
             \clstwo{}{4}{C_2}
             \clsone{}{5}{C_1}
            $}
  \end{aligned}
$$

$$
  \begin{aligned}
    &(5) \; \; \pi = \big\{ \{1, 5 \}, \{ 2, 3 \}, \{ 3, 4 \} \big\} 
  & &(6) \; \; \pi = \big\{ \{1, 2, 5 \}, \{ 2, 3 \}, \{ 3, 4 \}  \big\}     \\
    & \mbox{$ 
             \opnzer{}{1}{O_0}
             \opnone{}{2}{O_1}
             \trstwo{}{3}{T_2}
             \clstwo{}{4}{C_2}
             \clsone{}{5}{C_1}
            $}
    & \quad \;
    & \mbox{$ 
             \opnzer{}{1}{O_0}
             \updone{}{2}{U_1}
             \trstwo{}{3}{T_2}
             \clstwo{}{4}{C_2}
             \clsone{}{5}{C_1}
            $}
  \end{aligned}
$$
}
\end{example}

\subsection{Enumeration and the weighted Motzkin paths}
\label{sec:5_2}

We give the generating function of the enumerating 
polynomials for some set partition statistics in non-crossing 
linked partitions by using the weighted Motzkin paths.

\smallskip 

If each step in a Motzkin path has a weight then it is called 
the weighted Motzkin path. 

\smallskip 

Consider the three sequences 
$$
  \{ \mu_0,     \mu_1,      \mu_2,     ... \}, \; 
  \{ \lambda_1, \lambda_2,  \lambda_3, ... \}, \, \mbox{ and } \,
  \{ \kappa_0,  \kappa_1,   \kappa_2,  ... \},
$$
which we use as the weights for the up, the down, and the transit 
steps in a Motzkin path, respectively.

Let $\mathbf{p} = (s_1, s_2, ... , s_n)$ be a Motzkin path of length $n$ 
where $s_j \in \{ u, d, t \}$. We make the associated list of weights 
$\mathbf{w}_{\mathbf{p}} = (w_1, w_2, ... , w_n)$ depending on both 
the type and the height of each step as follows:
$$
\begin{aligned}
 \; & \mbox{ if } s_j = u, & \mbox{ then } w_j &= \mu_{y_j} 
                           & \; \; &(j \ge 0), \\
 \; & \mbox{ if } s_j = d, & \mbox{ then } w_j &= \lambda_{y_j} 
                           & \; \; &(j \ge 1),\\
 \; & \mbox{ if } s_j = t, & \mbox{ then } w_j &= \kappa_{y_j} 
                           & \; \; &(j \ge 0).
\end{aligned}
$$
where $y_j$ is the height of the step $s_j$.

Given a weighted Motzkin path $\mathbf{p}$ of length $n$, 
the weight of the Motzkin path $wt(\mathbf{p})$ is defined by 
${\displaystyle 
  wt(\mathbf{p}) = \! \prod_{j=1}^n \! w_j
}$, 
the product of the weights in $\mathbf{w}_{\mathbf{p}}$.

\smallskip 

The following well-known formula on the generating function 
can be derived with help of the results in \cite{Fl80}.

\begin{theorem}
Let $\mathcal{M}_n$ be the set of all Motzkin paths of length $n$ 
and assume each of the paths is weighted in the manner above.
We write the sum of all the weights of the Motzkin paths in 
$\mathcal{M}_n$ by 
$$
  m_n = \! \! \sum_{\mathbf{p} \in \mathcal{M}_n} \! \!
                     wt(\mathbf{p}),
$$
which is sometimes called the $n$th moment. 
Then its generating function 
$$
   M(z) =\sum_{n = 0}^{\infty} m_n z^n 
$$
can be expanded into the continued fraction of the form 
$$
 M(z) = 
        \cfrac{1}{ 1 - \kappa_0 z - 
        \cfrac{ \mu_0 \lambda_1 z^2 }{ 1 - \kappa_1 z -
        \cfrac{ \mu_1 \lambda_2 z^2 }{ 1 - \kappa_2 z -
        \cfrac{ \mu_2 \lambda_3 z^2 }{ 1 - \kappa_3 z -
        \cfrac{ \mu_3 \lambda_4 z^2 }{ \ddots
        }}}}} .
$$
\end{theorem}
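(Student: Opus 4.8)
The plan is to prove this by the classical first-return decomposition of weighted Motzkin paths, which is precisely the combinatorial mechanism behind Flajolet's theory invoked in \cite{Fl80}. First I would introduce, for each level $k \ge 0$, an auxiliary generating function $F_k(z)$ that enumerates those weighted Motzkin paths which start and end at height $k$ and never descend below height $k$, each step still carrying the weight $\mu$, $\lambda$, or $\kappa$ indexed by its actual height. Since the weight of a step depends only on its type and its height, translating such a level-$k$ path vertically shows that $F_k$ is governed by the same rule using only the weights indexed by $k, k+1, \dots$. In particular, an arbitrary Motzkin path counted in $m_n$ is exactly a level-$0$ path, so $M(z) = F_0(z)$, and it suffices to determine the $F_k$.

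Next I would derive the defining functional equation for $F_k$. Decomposing a nonempty level-$k$ path according to its first step yields three mutually exclusive cases: the empty path (contributing $1$); a transit step at height $k$ (weight $\kappa_k z$) followed by another level-$k$ path; or an up step at height $k$ (weight $\mu_k z$), after which one examines the first return to height $k$. The portion strictly between that up step and its matching return is a level-$(k+1)$ path, the returning down step has height $k+1$ (hence weight $\lambda_{k+1} z$), and the remainder is again a level-$k$ path. Reading off generating functions from this unique decomposition gives
$$
  F_k(z) = 1 + \kappa_k z\, F_k(z) + \mu_k \lambda_{k+1} z^2\, F_{k+1}(z)\, F_k(z),
$$
and solving the resulting linear equation for $F_k(z)$ produces the recurrence
$$
  F_k(z) = \cfrac{1}{\,1 - \kappa_k z - \mu_k \lambda_{k+1} z^2\, F_{k+1}(z)\,}.
$$

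Finally I would iterate this recurrence starting from $k=0$: each substitution replaces $F_{k+1}$ inside the denominator, generating successive layers of the form $1 - \kappa_k z - \mu_k \lambda_{k+1} z^2(\cdots)$ that reproduce exactly the stated continued fraction. To make the infinite iteration rigorous at the level of formal power series, I would observe that $F_{k+1}$ always enters multiplied by $z^2$, so the contribution of $F_{N+1}$ to $F_0$ carries an accumulated factor $z^{2(N+1)}$; equivalently, a path of length $n$ cannot reach height exceeding $\lfloor n/2 \rfloor$, so only finitely many $F_k$ contribute to each $m_n$. Consequently, truncating the continued fraction after level $N$ leaves unchanged all coefficients of $z^n$ with $n \le 2N+1$, and the finite truncations converge coefficientwise to a well-defined power series equal to $M(z)$.

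The step I expect to require the most care is the first-return decomposition itself: verifying that the cut at the first return to height $k$ is well-defined and bijective, so that the nonnegativity constraint is correctly accounted for and the middle portion is genuinely a level-$(k+1)$ path carrying exactly the translated weights. Once this bijection and the multiplicativity of the weight under concatenation are established, collapsing the recurrence into the displayed continued fraction is purely formal algebra.
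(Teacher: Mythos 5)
Your proof is correct and is exactly the standard first-return decomposition underlying Flajolet's theory of continued fractions, which is all the paper itself invokes: it states Theorem 5.6 without proof, citing \cite{Fl80}. Your handling of the weight indices (the returning down step starting at height $k+1$ carries $\lambda_{k+1}$) and of the formal-power-series convergence of the infinite continued fraction are both accurate, so nothing is missing.
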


\smallskip 

This formula enables us to give the generating function of the 
enumerating polynomials for some set partition statistics 
in non-crossing linked partitions via the weighted Motzkin paths.

\begin{definition}{\rm
For a non-crossing linked partition $\pi \in \mathcal{NCL}(n)$, we shall 
introduce the following set partition statistics:
\begin{itemize}
\setlength{\leftskip}{7mm}
\vspace{-5pt}
\item[$dc(\pi)$:]
{the number of doubly covered elements by $\pi$}, 
\vspace{-5pt}
\item[$sc(\pi)$:]
{the number of singly covered minimal elements by $\pi$, 
but non-singleton}, 
\vspace{-5pt}
\item[$sg(\pi)$:]
{the number of singletons in $\pi$}.
\end{itemize}
}
\end{definition}

\begin{example}{\rm
For the partition $\pi$ in Example 5.3, each value of the statistics
above becomes 
   $dc(\pi) = 3$, $sc(\pi) = 2$, and $sg(\pi$) = 1.
}
\end{example}

\begin{remark}{\rm
We should note that the relation 
$$
   | \pi | = dc(\pi) + sc(\pi) + sg(\pi)
\eqno{(5)}
$$
holds because the minimal element of each block 
of $\pi \in \mathcal{NCL}(n)$ falls into one of the above three 
statistics, where $|\pi|$ stands for the number of blocks 
in $\pi$. 

Moreover, only the doubly covered elements are double-counted, 
thus we have the equality
$$
    \sum_{B \in \mbox{\normalsize $\pi$}} |B| = n + dc(\pi).
\eqno{(6)}
$$
}
\end{remark}

\smallskip 

We encode the joint statistics $(dc, sc, sg)$ in $\mathcal{NCL}(n)$ by 
$(\alpha, \beta, \gamma)$, and write the generating function of the 
enumerating polynomials in $\alpha$, $\beta$, and $\gamma$ as 
$$
   \Gamma (z; \alpha, \beta, \gamma) = \sum_{n=0}^{\infty} 
   \bigg( \! \!  \sum_{\; \; \lbpi \in \mathcal{NCL}(n)} \! \! 
    \alpha^{dc(\pi)} \, \beta^{sc(\pi)} \, \gamma^{sg(\pi)} \bigg) \, z^n.
$$

\smallskip 

Here we assign the weights to the cards and consider the weighted 
cards arrangements, which yield the weighted Motzkin paths.

\smallskip 

How to assign the weight to the cards is simple, that is, we will 
assign the weights $\alpha$, $\beta$, and $\gamma$ to the cards that 
correspond to the set partition statistics $dc$, $sc$, and $sg$, 
respectively. The cards that do not contribute to any statistics 
should be weighted $1$.  

\medskip 

\noindent
{\it The weight of the cards}
\begin{itemize}
\setlength{\leftskip}{7mm}
\item[($\alpha$)]
{The doubly covered elements correspond to the 
cards $T_i \, (i \ge 1)$ and $U_i \, (i \ge 1)$.
Thus these cards have the weight $\alpha$.}
\item[($\beta$)]
{The singly covered minimal elements of non-singleton 
correspond to the opening cards $O_i \, (i \ge 0)$. Thus 
these cards have the weight $\beta$.}
\item[($\gamma$)]
{The singletons, of course, are represented by the singleton 
cards $S_i \, (i \ge 0)$ which have the weight $\gamma$.}
\item[($1$)]{We assign the weight $1$ to the intermediate cards 
$I_i \, (i \ge 1)$ and the closing cards $C_i \, (i \ge 1)$, 
which do not correspond to any statistics.}
\end{itemize}

\smallskip 

Then we define the weight of an admissible arrangement as the 
product of the weights of the cards used in it.

\medskip 

For instance, the non-crossing linked partition in 
Example 5.3 can be represented by the admissible cards 
arrangement 
$$
  \big(O_0, U_1, S_2, C_2, O_1, C_2, T_1, I_1, T_1, C_1 \big).
$$
$$
             \opnzer{\beta }{ 1}{O_0}
             \updone{\alpha}{ 2}{U_1}
             \sgltwo{\gamma}{ 3}{S_2}
             \clstwo{1     }{ 4}{C_2}
             \opnone{\beta }{ 5}{O_1}
             \clstwo{1     }{ 6}{C_2}
             \trsone{\alpha}{ 7}{T_1}
             \imdone{1     }{ 8}{I_1}
             \trsone{\alpha}{ 9}{T_1}
             \clsone{1     }{10}{C_1}
$$
Then the weight of the arrangement is given by 
$$
  \beta  \cdot \alpha \cdot \gamma \cdot 1 \cdot \beta  \cdot 
  1      \cdot \alpha \cdot 1 \cdot \alpha \cdot 1 
  = \alpha^3 \beta^2 \gamma.
$$

\medskip 

\noindent
{\it The weights of the steps}

Now we can determine the weights of the steps in the weighted 
Motzkin paths for our statistics according to the rule of the 
arrangements for ${\mathcal{NCL}}(n)$ as follows:
\begin{itemize}
\setlength{\leftskip}{7mm}
\item[($u$)]
{For the up step at the ground level, only the 
opening card $O_0$ is available, thus $\mu_0 = \beta$. 
While for the up step at the height $i \ge 1$, the opening card 
$O_i \, (i \ge 1)$ and the card for the doubly covered elements of 
type II $U_i \, (i \ge 1)$ are available, thus 
$\mu_i = \alpha + \beta  \, (i \ge 1)$.}
\item[($t$)]
{For the transit step at the ground level, we can use only the singleton 
card $S_0$, thus $\kappa_0 = \gamma$. 
For the up step at the height $i \ge 1$, however, we can use 
the intermediate card $I_i \, (i \ge 1)$, the card for doubly 
covered elements of type I $T_i \, (i \ge 1)$, and 
the singleton card $S_i \, (i \ge 1)$, thus 
$\kappa_i = 1 + \alpha + \gamma  \, (i \ge 1)$.}
\item[($d$)]
{For the down step at the height $i \ge 1$, only the closing 
card $C_i \, (i \ge 1)$ is available, thus 
$\lambda_i = 1 \, (i \ge 1)$.}
\end{itemize}

Consequently, the weights of the steps for the weighted 
Motzkin path for our joint statistics $(\alpha, \beta, \gamma)$ 
should be given as follows:
$$
\left.
\begin{aligned}
  & \mbox{for up step}, & \; \; 
   \mu_{i} & = \mbox{$\displaystyle{ 
                   \Big\{
                    \begin{array}{ll}
                     \! \beta          \quad \; \; \; & (i = 0),  \\
                     \! \alpha + \beta \quad \; \; \; & (i \ge 1),
                    \end{array} 
                   }$}\\
  & \mbox{for transit step}, & \; \; 
\kappa_{i} & = \mbox{$\displaystyle{ 
                   \Big\{
                    \begin{array}{ll}
                     \! \gamma              \; & (i = 0),  \\
                     \! 1 + \alpha + \gamma \; & (i \ge 1),
                    \end{array} 
                    }$}\\
  & \mbox{for down step}, & \; \; 
     \lambda_{i} & = 1 \quad (i \ge 1).
\end{aligned}
\qquad \qquad 
\right\}
\eqno{(7)}
$$

\smallskip

\begin{example}{\rm
Each admissible arrangement for the Motzkin path 
$\mathbf{p} = (u,u,t,d,d)$ in Example 5.5 has the 
following weight:
$$
  \begin{aligned}
   (1) & \quad \mbox{wt} \big( O_0, O_1, I_2, C_2, C_1 \big) 
       & \quad \;
   (2) & \quad \mbox{wt} \big( O_0, U_1, I_2, C_2, C_1 \big) \\
       & \quad = \beta \cdot \beta  \cdot 1 \cdot 1 \cdot 1, 
       & \quad \;
       & \quad = \beta \cdot \alpha \cdot 1 \cdot 1 \cdot 1, \\
   (3) & \quad \mbox{wt} \big( O_0, O_1, S_2, C_2, C_1 \big) 
       & \quad \;
   (4) & \quad \mbox{wt} \big( O_0, U_1, S_2, C_2, C_1 \big) \\
       & \quad = \beta  \cdot  \beta  \cdot \gamma  \cdot 1,  \cdot 1 
       & \quad \;
       & \quad = \beta  \cdot  \alpha \cdot \gamma  \cdot 1,  \cdot 1 \\
   (5) & \quad \mbox{wt} \big( O_0, O_1, T_2, C_2, C_1 \big) 
       & \quad \;
   (6) & \quad \mbox{wt} \big( O_0, U_1, T_2, C_2, C_1 \big) \\
       & \quad = \beta  \cdot  \beta  \cdot \alpha  \cdot 1,  \cdot 1 
       & \quad \;
       & \quad = \beta  \cdot  \alpha \cdot \alpha  \cdot 1.  \cdot 1 
  \end{aligned}
$$
The sum of the above $6$ weights of the admissible arrangements is
$$
 \beta \cdot (\alpha + \beta) \cdot (1 + \alpha + \gamma) \cdot 
 1 \cdot 1 = \mu_0 \, \mu_1 \, \kappa_2 \, \lambda_2 \, \lambda_1,
$$
that is, the weight of the Motzkin path $\mathbf{p} = (u,u,t,d,d)$.
}
\end{example}

\smallskip

From Theorem 5.6, it follows that the generating function 
$\Gamma (z; \alpha, \beta, \gamma)$ can be obtained by the continued 
fraction of the form 
$$
   \Gamma (z; \alpha, \beta, \gamma) = 
        \mbox{$\displaystyle{
        \cfrac{1}{ 1 - \gamma \, z - 
        \cfrac{ \beta \, z^2 }{ 1 - (1 + \alpha + \gamma) \, z -
        \cfrac{ (\alpha + \beta) \, z^2 }{ 1 - (1 + \alpha + \gamma) \, z -
        \cfrac{ (\alpha + \beta) \, z^2 }{ \ddots
        }}}} 
        }$}\, ,
\eqno{(9)}
$$
which is rewritten as 
$$
 \Gamma (z; \alpha, \beta, \gamma) =
    \mbox{$\displaystyle{
    \cfrac{1}{ 1 - \gamma \, z - 
    \cfrac{\beta \, z^2}{h (z; \alpha, \beta, \gamma)}}
    }$} ,
$$
where the recursive part $h (z; \alpha, \beta, \gamma)$ satisfies 
the relation 
$$
   h (z; \alpha, \beta, \gamma)
 = 1 - (1 + \alpha + \gamma) \, z - \frac{ (\alpha + \beta) \, z^2}
   {h (z; \alpha, \beta, \gamma)}.
$$
Eliminating $h(z; \alpha, \beta, \gamma)$, we have the following formula:

\begin{theorem}
The generating function $\Gamma (z) = \Gamma (z; \alpha, \beta, \gamma)$ 
satisfies the quadratic equation,
$$
\mbox{$\displaystyle{
 \begin{aligned}
    &  \big( 1 + (\beta - \gamma) \, z \big) \, 
       \big( \alpha + (\beta - \alpha \, \gamma) \, z \big) \, \Gamma (z)^2 \\
    & \qquad 
             - \big\{ (2 \, \alpha + \beta) + 
                     \big( \, \beta \,(1 + \alpha + \gamma)  
                  - 2 \, (\alpha + \beta) \, \gamma \big) \, z 
               \big\} \,\Gamma (z)  + (\alpha + \beta) = 0,
 \end{aligned}
}$}
$$
and its closed form is solved as 
$$
 \Gamma (z; \alpha, \beta, \gamma)
    = \mbox{$\displaystyle{
        \frac{
         \Bigg\{ \mbox{$
          \displaystyle{
         \begin{aligned}
        & (2 \, \alpha + \beta) + \big( (1 + \alpha + \gamma) \, \beta 
         - 2 \, (\alpha + \beta) \, \gamma \big) \, z  \\
        &  \qquad \qquad 
           - \beta \, 
              \mbox{$\sqrt{ ( 1 - (1 + \alpha + \gamma) \, z )^2
              - 4 \, (\alpha + \beta) \, z^2}$}
         \end{aligned}}$} \; \Bigg\} }
       {\Big. 2 \, \big( 1 + (\beta - \gamma) \, z \big)  
              \big( \alpha + (\beta - \alpha \, \gamma) \, z \big) \Big. }
       }$} .
$$
\end{theorem}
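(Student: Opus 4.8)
The plan is to eliminate the auxiliary function $h(z;\alpha,\beta,\gamma)$ between the two relations displayed just before the statement, collapsing the continued fraction to a single quadratic in $\Gamma(z)$. First I would record that the self-consistency relation for $h$ is itself quadratic: writing $Q = 1-(1+\alpha+\gamma)z$ for brevity, it reads $h^2 - Q\,h + (\alpha+\beta)z^2 = 0$, the relevant branch being the one with $h(0)=1$ inherited from the value of the full continued fraction at $z=0$. From the outermost layer $\Gamma = 1/(1-\gamma z - \beta z^2/h)$ I would solve for $h$, obtaining $h = \beta z^2\,\Gamma/((1-\gamma z)\Gamma - 1)$.

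Substituting this into the $h$-quadratic and clearing denominators---multiplying through by $((1-\gamma z)\Gamma-1)^2/z^2$---produces, after collecting powers of $\Gamma$, an equation $a\,\Gamma^2 + b\,\Gamma + c = 0$ with constant term $c=\alpha+\beta$, linear coefficient $b = Q\beta - 2(\alpha+\beta)(1-\gamma z)$, and leading coefficient $a = \beta^2 z^2 - Q\beta(1-\gamma z) + (\alpha+\beta)(1-\gamma z)^2$. Expanding $a$ and $b$ as polynomials in $z$ and comparing with the coefficients in the statement is routine except for one point: the three coefficients of $a$ must reassemble into the factored form $(1+(\beta-\gamma)z)(\alpha+(\beta-\alpha\gamma)z)$. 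I would confirm this by expanding that product and checking that its $z^0$, $z^1$, $z^2$ coefficients are $\alpha$, $\beta+\alpha\beta-2\alpha\gamma$, and $(\beta-\gamma)(\beta-\alpha\gamma)$ respectively. This yields precisely the quadratic equation asserted. (Since the map $h\mapsto\Gamma$ is a Möbius transformation for fixed $z$, the two roots of the $\Gamma$-quadratic correspond bijectively to the two branches of $h$, so no spurious root is introduced.)

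For the closed form I would invoke the quadratic formula $\Gamma = (-b - \sqrt{b^2-4ac})/(2a)$. Everything hinges on the \emph{discriminant identity} $b^2 - 4ac = \beta^2\,(Q^2 - 4(\alpha+\beta)z^2)$, which pulls the factor $\beta$ out of the radical and leaves exactly the radicand $(1-(1+\alpha+\gamma)z)^2 - 4(\alpha+\beta)z^2$ appearing in the statement. I expect this identity to be the main obstacle, and I would verify it coefficient by coefficient in $z$. The $z^0$ coefficient collapses immediately, $(2\alpha+\beta)^2 - 4\alpha(\alpha+\beta) = \beta^2$; the $z^1$ coefficient reduces to $-2\beta^2(1+\alpha+\gamma)$; and the $z^2$ coefficient, namely the square of the coefficient of $z$ in $b$ minus $4(\alpha+\beta)(\beta-\gamma)(\beta-\alpha\gamma)$, must be shown after full expansion to equal $\beta^2((1+\alpha+\gamma)^2 - 4(\alpha+\beta))$, which is the most laborious of the three. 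Finally I would fix the branch by the normalization $\Gamma(0)=1$ (the empty non-crossing linked partition supplies the constant term): evaluating the closed form at $z=0$ gives $(2\alpha+\beta \mp \beta)/(2\alpha)$, so the minus sign before the square root is forced, in agreement with the displayed expression.
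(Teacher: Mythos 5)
Your proposal is correct and follows essentially the same route as the paper, which likewise obtains the quadratic by eliminating the recursive tail $h$ between the outer layer of the continued fraction and the self-consistency relation $h = 1-(1+\alpha+\gamma)z - (\alpha+\beta)z^2/h$ (the paper simply omits the algebra you spell out). One small simplification: writing $P=1-\gamma z$, your coefficients are $a=\beta^2z^2-Q\beta P+(\alpha+\beta)P^2$, $b=Q\beta-2(\alpha+\beta)P$, $c=\alpha+\beta$, so the cross terms in $b^2$ and $4ac$ cancel identically and the discriminant identity $b^2-4ac=\beta^2\big(Q^2-4(\alpha+\beta)z^2\big)$ falls out in one line, with no need for the coefficient-by-coefficient check you anticipated as the laborious step.
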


\subsection{Combinatorial moment formulas}
\label{sec:5_2}

Related to the additive free convolution, the remarkable relation 
between the moments and the free cumulants (the coefficients of 
$R$-transform) was discovered by Speicher in \cite{Sp94}, which is 
known as the free moment-cumulant formula.

\smallskip

\begin{theorem}{\rm (\cite{Sp94})}
Let $\mu$ be a compactly supported probability measure on 
${\mathbb R}$ and denote its $R$-transform by 
$$
  R_\mu (z) = \sum_{k \ge 1} r_k z^{k-1}.
$$
Then the $n$th moment of $\mu$ can be given by the 
combinatorial formula 
$$
  m_n (\mu) = \sum_{\lbpi \in \mathcal{NC}(n)} \prod_{B \in \lbpi}
           r_{|B|},
$$
where $\mathcal{NC}(n)$ is the set of non-crossing partitions of 
$[n]$.
\end{theorem}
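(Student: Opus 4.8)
The plan is to turn both sides of the claimed identity into formal power series and show that they satisfy one and the same functional equation, whose power-series solution is unique. Write $M_\mu(z) = \sum_{n \ge 0} m_n(\mu)\, z^n$ for the moment generating series (so that, from the expansion $G_\mu(w) = \sum_{n \ge 0} m_n(\mu)\, w^{-n-1}$ near $\infty$, one has $G_\mu(w) = \tfrac1w M_\mu(1/w)$), and let
$$
   C_n = \sum_{\lbpi \in \mathcal{NC}(n)} \prod_{B \in \lbpi} r_{|B|},
   \qquad \tilde{M}(z) = \sum_{n \ge 0} C_n\, z^n, \quad C_0 = 1,
$$
denote the combinatorial side and its generating series. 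The goal is then to prove $m_n(\mu) = C_n$ for all $n \ge 0$.

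First I would establish a recursion for $C_n$ from the self-similar structure of non-crossing partitions. Fixing $\pi \in \mathcal{NC}(n)$, I look at the block $B \ni 1$, say $B = \{1 = i_1 < i_2 < \cdots < i_s\}$. Since no block of $\pi$ may cross $B$, every other block lies inside exactly one of the $s$ gaps $V_j = \{i_j + 1, \dots, i_{j+1} - 1\}$ (with $i_{s+1} - 1 := n$), and the restriction of $\pi$ to each $V_j$ is an arbitrary non-crossing partition of $V_j$. Recording the factor $r_s$ from $B$ and the independent contributions $C_{|V_j|}$ of the gaps gives
$$
   C_n = \sum_{s = 1}^{n} r_s \sum_{\substack{a_1 + \cdots + a_s = n - s \\ a_j \ge 0}} \prod_{j = 1}^{s} C_{a_j},
$$
which at the level of generating series reads exactly $\tilde{M}(z) = 1 + \sum_{s \ge 1} r_s \big( z \tilde{M}(z) \big)^s = 1 + z \tilde{M}(z)\, R_\mu\big( z \tilde{M}(z) \big)$.

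Next I would show the analytic moments satisfy the identical equation, using only the defining relation $G_\mu^{\langle -1 \rangle}(z) = R_\mu(z) + \tfrac1z$. Setting $v = G_\mu(w)$, this relation reads $R_\mu(v) + \tfrac1v = w$; writing $z = 1/w$ and using $v = \tfrac1w M_\mu(1/w) = z M_\mu(z)$, I substitute into $w = R_\mu(v) + \tfrac1v$ and clear denominators to obtain
$$
   M_\mu(z) = 1 + z M_\mu(z)\, R_\mu\big( z M_\mu(z) \big),
$$
the same functional equation as for $\tilde{M}$.

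Finally I would invoke uniqueness. Expanding $M(z) = 1 + z M(z)\, R_\mu\big(z M(z)\big)$ in powers of $z$, the coefficient of $z^n$ on the left is $m_n$, while on the right the overall factor $z$ lowers the order and $R_\mu(zM) = r_1 + r_2\, zM + \cdots$ contributes only its constant term $r_1$ to the top order; hence $m_n$ is determined by $m_0, \dots, m_{n-1}$, so the equation has a unique formal power-series solution with constant term $1$. As $M_\mu$ and $\tilde{M}$ are both such solutions, $M_\mu = \tilde{M}$ and therefore $m_n(\mu) = C_n$. The main obstacle, and the sole point where \emph{non-crossing} (rather than arbitrary) partitions is essential, is the block-decomposition step: one must verify that non-crossingness forces each block other than $B$ into a single gap, so that the gaps factorize and yield the product series $\tilde{M}^s$. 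The two power-series manipulations are then routine.
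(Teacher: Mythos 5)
Your proof is correct. Note that the paper itself gives no argument for this statement: it is quoted as a known result of Speicher with the citation [Sp94], so there is no in-paper proof to compare against. What you have written is the standard self-contained derivation (closer in spirit to Voiculescu's and Haagerup's treatments via the functional equation than to Speicher's original argument through multiplicative functions on the lattice $\mathcal{NC}(n)$): the decomposition of $\pi\in\mathcal{NC}(n)$ along the block containing $1$ is valid --- non-crossingness does force every other block into a single gap $V_j$, and conversely any choice of non-crossing partitions of the gaps is admissible, so the count factorizes and gives $\tilde M(z)=1+z\tilde M(z)R_\mu\bigl(z\tilde M(z)\bigr)$; the same equation for $M_\mu$ follows from $G_\mu^{\langle -1\rangle}(z)=R_\mu(z)+1/z$ exactly as you compute; and the uniqueness argument (the coefficient of $z^n$ on the right involves only $m_0,\dots,m_{n-1}$) closes the loop. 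The one point worth making explicit, since everything here is formal power series in $z$ around $0$ while $G_\mu$ is expanded at $\infty$, is that the substitution $v=zM_\mu(z)$ into the inverse relation is legitimate as an identity of formal power series (or, analytically, for $|z|$ small because $\mu$ is compactly supported so $G_\mu$ is univalent near $\infty$); you implicitly use this but it costs one sentence to say.
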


Like in the additive case, related to the multiplicative free 
convolution, the combinatorial formula of the moments by 
the coefficients of $T$-transform was found by Dykema 
in \cite{Dy07} (see also \cite{Ni10}).

\begin{theorem} {\rm (\cite{Dy07})}
Let $\mu$ be a compactly supported probability measure on 
$[0, \infty)$ with non-zero mean and denote its $T$-transform by 
$$
   T_{\mu} (z) = \sum_{k \ge 0} \alpha_k \, z^k,
$$
then the $n$th moment of $\mu$ can be given by the 
combinatorial formula
$$
  m_n (\mu)  = \!\!\!\! \sum_{\pi \in \mathcal{NCL} (n)} \!\!
              \alpha_0^{n - |\pi|}
              \prod_{B \in \pi} \alpha_{|B| - 1} 
             = \alpha_0^{n} \!\!\!\! \sum_{\pi \in \mathcal{NCL} (n)} 
              \prod_{B \in \pi} \Big( 
                 \frac{\alpha_{|B| - 1}}{\alpha_{0}} \Big),
$$
where $\mathcal{NCL} (n)$ is the set of non-crossing linked 
partitions of $[n]$.
\end{theorem}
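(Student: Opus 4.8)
The plan is to compute $m_n(\mu)$ directly from the canonical operator for the multiplicative free convolution recalled at the end of Section~2.3. Fix a unit vector $e\in\mathcal H$, put $\ell=\ell(e)$, and set $X=({\bm 1}+\ell)\,T_\mu(\ell^{*})$; by the canonical-operator property its moments coincide with those of $\mu$, so that $m_n(\mu)=\varphi(X^n)=\langle X^n\Omega,\Omega\rangle$. First I would write $X=\sum_{k\ge 0}\alpha_k(\ell^{*})^k+\sum_{k\ge 0}\alpha_k\,\ell\,(\ell^{*})^k$ and expand $X^n$ into monomials, so that at each of the $n$ factors one selects an exponent $k_i\ge 0$, contributing the scalar $\alpha_{k_i}$, together with a choice $\epsilon_i\in\{0,1\}$ of whether the leading $\ell$ is kept. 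Using $\ell^{*}\ell={\bm 1}$, $\ell^{*}\Omega=0$ and $(\ell^{*})^{k}e^{\otimes h}=e^{\otimes(h-k)}$, every resulting word in $\ell,\ell^{*}$ has vacuum expectation equal to $0$ or $1$, the value $1$ occurring exactly when the height, which at factor $i$ first drops by $k_i$ and then rises by $\epsilon_i$, stays non-negative throughout and returns to $0$. Hence $m_n(\mu)=\sum\prod_{i=1}^n\alpha_{k_i}$, the sum being taken over all such admissible configurations.

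The heart of the argument is then a weight-preserving bijection between these admissible configurations and $\mathcal{NCL}(n)$, in which factor $i$ is read as the element $i\in[n]$. The $k_i$ annihilations performed at factor $i$ close off the $k_i$ most recently opened, still-open levels; by the last-in-first-out discipline on the single Fock space these levels were created at $k_i$ earlier-acting factors, all of index larger than $i$, and declaring $i$ together with those factors to be a block $B$ with $\min B=i$ produces the blocks of $\pi$. A factor with $\epsilon_i=1$ opens a level that is later closed at the minimum of the block in which it is then a non-minimal element, and a factor with both $k_i\ge 1$ and $\epsilon_i=1$ is simultaneously the minimum of one block and a non-minimal element of another, that is, a doubly covered element. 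Since $\min B=i$ forces $k_i=|B|-1$, the factor sitting at the minimum of each block contributes the weight $\alpha_{|B|-1}$ while every remaining factor contributes $\alpha_0$; and because the nearly disjoint axiom makes each element the minimum of at most one block (Remark~5.2(3)), exactly $n-|\pi|$ of the factors are non-minima. Consequently $\prod_{i=1}^n\alpha_{k_i}=\alpha_0^{\,n-|\pi|}\prod_{B\in\pi}\alpha_{|B|-1}$, which is the asserted summand, and summing over $\pi\in\mathcal{NCL}(n)$ yields the formula.

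The step I expect to be the main obstacle is the verification that the above grouping is a genuine bijection, i.e. that the admissibility of the height path is equivalent to the two defining axioms of a non-crossing linked partition, and that every $\pi\in\mathcal{NCL}(n)$ is realized exactly once. Non-negativity of the height together with the last-in-first-out closing encodes the non-crossing condition and forces the levels closed at $\min B$ to be nested; the delicate point is the linking, namely showing that an ``annihilate-then-create'' factor shares exactly one element between the block it closes and the block it opens, that this shared element is the minimum of the newly opened block, and that this reproduces precisely conditions (a) and (b) of near disjointness and the type~I / type~II dichotomy of doubly covered elements. This is in essence Dykema's bijection from \cite{Dy07}. I should stress that this grouping is coarser than the unit-step card encoding of Section~5.1, where a block of size $\ell$ is spread over $\ell$ cards; the two encodings describe the same set $\mathcal{NCL}(n)$, but it is the present grouping, not the card/Motzkin picture used for the statistics $(dc,sc,sg)$ in Theorem~5.11, that matches the product $\prod_i\alpha_{k_i}$ of $T$-transform coefficients.

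As an alternative I would keep in reserve the analytic route. Setting $P(z)=\sum_{n\ge 0}\big(\sum_{\pi\in\mathcal{NCL}(n)}\alpha_0^{\,n-|\pi|}\prod_{B\in\pi}\alpha_{|B|-1}\big)z^n$, one decomposes a non-crossing linked partition according to the block containing the element $1$ and the partitions nested under its arcs, which yields a functional equation for $P$. Matching this equation with the relations $T_\mu=1/S_\mu$ and $\Phi_\mu\big(\tfrac{z}{z+1}S_\mu(z)\big)=z$ of Section~2.3 would identify $P$ with $1+\Phi_\mu$ and so recover the moments. This avoids exhibiting the bijection, at the cost of establishing the recursive decomposition of $\mathcal{NCL}(n)$, which carries essentially the same combinatorial difficulty.
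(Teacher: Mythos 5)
Your proposal is essentially a reconstruction of Dykema's original argument, which is a reasonable thing to attempt here: the paper itself gives no proof of this theorem, stating it as a known result with the citation \cite{Dy07}. Your route --- realize $\mu$ by the canonical operator $X=({\bm 1}+\ell)\,T_\mu(\ell^{*})$ of Section~2.3, expand $\varphi(X^n)$ into a sum over height-admissible words weighted by $\prod_i\alpha_{k_i}$, and then match these words with $\mathcal{NCL}(n)$ so that the element performing $k_i$ annihilations is the minimum of a block of size $k_i+1$ --- is exactly the mechanism behind the cited result, and your weight bookkeeping is correct: near disjointness forces distinct blocks to have distinct minima, so exactly $|\pi|$ factors carry a weight $\alpha_{|B|-1}$ (singletons contributing $\alpha_0$) and the remaining $n-|\pi|$ factors carry $\alpha_0$. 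You are also right to distinguish this coarse grouping (one step of size $\epsilon_i-k_i$ per element of $[n]$) from the unit-step card/Motzkin encoding the paper develops in Section~5.1 and Proposition~5.15: the latter computes the statistics $(dc,sc,sg)$ for a different operator and does not directly produce the product of $T$-transform coefficients, so conflating the two would have been an error, and you avoid it. The one place where your write-up is an outline rather than a proof is the one you flag yourself: the verification that non-negativity of the height path together with the last-in-first-out closing rule is equivalent to the non-crossing and nearly-disjoint axioms, that the type~I/type~II dichotomy of doubly covered elements is reproduced, and that every $\pi\in\mathcal{NCL}(n)$ arises exactly once. That verification, together with the canonical-operator property $\varphi(X^n)=m_n(\mu)$ which you take as given (legitimately, since Section~2.3 states it), is where the real content of Dykema's theorem lives; as written, your argument is a correct skeleton of his proof rather than a self-contained one.
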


\begin{remark}{\rm
Although $\mathcal{NCL} (n)$ can not exactly make a lattice, 
the moments and the coefficients of $T$-transform determine 
each other in the same manner as in the additive free 
moment-cumulant formula.
}
\end{remark}

\smallskip

At the end of this section we introduce the distinguished 
operator on the full space $\mathcal{T}(\mathcal{H})$ whose moments 
are closely related to the non-crossing linked partitions.

\begin{proposition}
Let $\mathcal{H}$ be a Hilbert space and $\mathcal{T}(\mathcal{H})$ 
be a full Fock space of $\mathcal{H}$.
For a unit vector $\xi \in \mathcal{H}$, let $\ell = \ell(\xi)$ 
and $\ell^{*} = \ell(\xi)^{*}$ be the left creation and the left 
annihilation operators on a full Fock space 
$\mathcal{T}(\mathcal{H})$, respectively. 
We set the operator $X$ in 
$\mathcal{B} \big( \mathcal{T}(\mathcal{H})\big)$ as
$$
 X = \gamma \, {\bm 1} + 
      \beta \, \ell + \ell^* + (1 + \alpha) \, \ell \ell^* 
      + \alpha \, \ell^2 \ell^*.
$$
Then the $n$th moment of the operator $X$ with respect to 
the vacuum expectation $\varphi$ is given by
$$
  \varphi \big( X^n \big) = \! \! \! \! 
   \sum_{\; \; \pi \in \mathcal{NCL}(n)} \! \! 
     \alpha^{dc(\pi)} \, \beta^{sc(\pi)} \, \gamma^{sg(\pi)}.
$$
Hence the moment generating function of the random variable $X$ is 
given by $\Gamma (z; \alpha, \beta, \gamma)$ in Theorem 5.11.
\end{proposition}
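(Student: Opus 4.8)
The plan is to compute $\varphi(X^n)=\langle X^n\Omega,\Omega\rangle$ by realizing it as the $n$th moment $m_n$ of the weighted Motzkin paths carrying exactly the step weights (7), and then to invoke the card-arrangement correspondence of the previous subsections to rewrite $m_n$ as the stated sum over $\mathcal{NCL}(n)$.

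First I would restrict attention to the subspace $\mathcal{D}=\overline{\mathrm{span}}\{\xi^{\otimes k}:k\ge 0\}$ of $\mathcal{T}(\mathcal{H})$, where $\xi^{\otimes 0}=\Omega$. Since $\ell\,\xi^{\otimes k}=\xi^{\otimes(k+1)}$ for all $k\ge 0$ and $\ell^{*}\xi^{\otimes k}=\xi^{\otimes(k-1)}$ for $k\ge 1$ with $\ell^{*}\Omega=0$, each of the five monomials of $X$ sends $\xi^{\otimes k}$ to a scalar multiple of $\xi^{\otimes j}$ with $j\in\{k-1,k,k+1\}$; hence $\mathcal{D}$ is $X$-invariant and $\varphi(X^n)$ is computed entirely within $\mathcal{D}$. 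Reading the level $k$ as a height, I would record the action of each monomial as a (step type, weight) pair, taking special care of the exceptional behaviour at $k=0$ caused by $\ell^{*}\Omega=0$: the term $\beta\,\ell$ is an up step of weight $\beta$ at every level; $\alpha\,\ell^{2}\ell^{*}$ is an up step of weight $\alpha$ but only at levels $k\ge 1$; $\ell^{*}$ is a down step of weight $1$; $\gamma\,{\bm 1}$ is a transit step of weight $\gamma$ at every level; and $(1+\alpha)\,\ell\ell^{*}$ is a transit step of weight $1+\alpha$ but only at levels $k\ge 1$. Adding the weights that share a step type and height reproduces exactly the weights $\mu_i,\lambda_i,\kappa_i$ of (7), the ground-level values $\mu_0=\beta$ and $\kappa_0=\gamma$ arising precisely because $\alpha\,\ell^{2}\ell^{*}$ and $(1+\alpha)\,\ell\ell^{*}$ annihilate $\Omega$.

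Next I would expand $X^n$ as a sum over all length-$n$ words in the five monomials and extract the coefficient of $\Omega$ in $X^n\Omega$. A word contributes to $\langle X^n\Omega,\Omega\rangle$ if and only if the successive level changes it prescribes start at height $0$, end at height $0$, and never go negative (the non-negativity forced by $\ell^{*}\Omega=0$); that is, if and only if the word traces a Motzkin path of length $n$, each letter contributing its weight from the list above. Collecting these contributions, and noting that at levels $\ge 1$ a single up (resp.\ transit) step may be realized by two distinct monomials, I obtain $\varphi(X^n)=\sum_{\mathbf{p}\in\mathcal{M}_n}wt(\mathbf{p})=m_n$ for the weighted Motzkin paths carrying the weights (7).

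Finally, the identification $m_n=\sum_{\pi\in\mathcal{NCL}(n)}\alpha^{dc(\pi)}\beta^{sc(\pi)}\gamma^{sg(\pi)}$ is exactly what the card-arrangement construction of Sections 5.1--5.2 supplies: each admissible arrangement over a Motzkin path corresponds to a unique $\pi\in\mathcal{NCL}(n)$, the weight of the arrangement is $\alpha^{dc(\pi)}\beta^{sc(\pi)}\gamma^{sg(\pi)}$, and summing over the arrangements supported on a fixed path reproduces precisely the step weights (7). Combining the two identities yields the moment formula, and the generating-function claim then follows from the definition of $\Gamma(z;\alpha,\beta,\gamma)$ together with its closed form in Theorem 5.11 (equivalently the continued fraction (9) obtained via Theorem 5.6). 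The only real subtlety, and the step I would treat most carefully, is this ground-level bookkeeping: one must verify that $\ell^{*}\Omega=0$ forces exactly the asymmetry $\mu_0=\beta$ and $\kappa_0=\gamma$, which is the operator-theoretic counterpart of the combinatorial facts that $1$ and $n$ are singly covered and that no link occurs at the ground level.
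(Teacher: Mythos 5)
Your proposal is correct and follows essentially the same route as the paper: grouping the five monomials of $X$ by step type (the paper writes this as the decomposition $X=u+d+t$), computing their action on $\xi^{\otimes k}$ to recover exactly the weights in (7) including the ground-level exceptions forced by $\ell^{*}\Omega=0$, identifying the contributing words in the expansion of $X^{n}$ with weighted Motzkin paths, and then passing to $\mathcal{NCL}(n)$ via the card-arrangement correspondence. The only cosmetic difference is that you expand over the five monomials and regroup afterwards, whereas the paper regroups first and expands $(u+d+t)^{n}$ into $3^{n}$ terms.
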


\begin{proof}
We decompose the operator $X$ and set $u$, $d$, $t$ as follows: 
$$
 \begin{aligned}
 X & = \gamma \, {\bm 1} + 
      \beta \, \ell + \ell^* + (1 + \alpha) \, \ell \ell^* 
      + \alpha \, \ell^2 \ell^* \\
   & = \underbrace{\alpha \, \ell^2 \ell^* 
        + \beta \, \ell}_{\mbox{\normalsize{$u$}}}  
    + \underbrace{\ell^*}_{\mbox{\normalsize{$d$}}} 
    + \underbrace{\gamma \, {\bm 1} 
        + (1 + \alpha) \, \ell \ell^* }_{\mbox{\normalsize{$t$}}} .
 \end{aligned}
$$
Then the operators $u$, $d$, and $t$ act on the elementary vectors 
$\xi^{\otimes n} \in \mathcal{T}(\mathcal{H})$ $(n \ge 0)$ that 
$$
 \begin{aligned}
  u \, \xi^{\otimes n} & = 
   \begin{cases} 
     \beta \, \xi^{\otimes 1}                  & \mbox{ if }  n = 0, \\
     (\alpha + \beta) \, \xi^{\otimes (n + 1)} & \mbox{ if }  n \ge 1, 
   \end{cases} 
   \qquad 
  d \, \xi^{\otimes n}   = 
   \begin{cases} 
      0                     & \mbox{ if }  n = 0, \\
      \xi^{\otimes (n - 1)} & \mbox{ if }  n \ge 1.
   \end{cases} \\
  t \, \xi^{\otimes n} & = 
   \begin{cases} 
      \gamma \Omega                            & \mbox{ if }  n = 0, \\
      (1 + \alpha + \gamma) \, \xi^{\otimes n} & \mbox{ if }  n \ge 1, 
   \end{cases}
 \end{aligned}
$$
where $\xi^{\otimes 0}$ conventionally means the vacuum vector $\Omega$.
By using the weights for the weighted Motzkin paths in (7), 
we can simply write 
$$
 \begin{aligned}
  u \, \xi^{\otimes n} & = \mu_n \, \xi^{\otimes (n + 1)} \; (n \ge 0), & 
  \qquad 
  d \, \xi^{\otimes n} & = \lambda_n \, \xi^{\otimes (n - 1)} \; (n \ge 1), \\
  t \, \xi^{\otimes n} & = \kappa_n \, \xi^{\otimes n} \; (n \ge 0). & 
 \end{aligned}
$$

Now we expand $X^n = (u + d + t)^n$ into $3^n$ non-commutative 
monomials in $s$, $d$, and $t$ as 
$$
  \sum_{s_i \in \{ u, d, t \}
        (i=1,\ldots, n)}  
   s_n s_{n-1} \cdots s_1, 
$$
then only the monomials of the Motzkin path type could 
contribute to the vacuum expectation 
$\varphi(\, \cdot \,) = \langle \, \cdot \, \Omega | \Omega \rangle$.
Indeed, it follows that 
$s_n s_{n-1} \cdots s_1  \, \Omega \in \mathbb{C} \, \Omega$ if and 
only if $s_1 s_2 \cdots s_n$ makes a Motzkin path of length $n$.
In this case, 
$$
 \varphi(s_n s_{n-1} \cdots s_1) 
 = \langle \, s_n s_{n-1} \cdots s_1 \Omega | \Omega \rangle
 = wt(\mathbf{p}), 
$$
where $wt(\mathbf{p})$ is the weight of the weighted Motzkin 
path $\mathbf{p}$. Hence it follows that 
$$
 \varphi \big( X^n \big) 
 = \sum_{\bm{p} \in \mathcal{M}} wt(\mathbf{p}), 
$$
where the weight sequences for the weighted Motzkin paths are given 
as in (7).

\smallskip 

Here we should note that the following correspondence 
between the operators $u$, $d$, $t$ and the cards for 
$\mathcal{NCL}(n)$ can be found by decomposition of $u$ and $t$:
$$
 u = \underbrace{\big. \alpha \, \ell^2 \ell^*}_{\mbox{\small{$U$ card}}} 
   + \underbrace{\big. \beta \, \ell}_{\mbox{\small{$O$ card}}},
\quad 
 d = \underbrace{\big. \ell^*}_{\mbox{\small{$C$ card}}}, 
\quad 
 t = \underbrace{\big. \gamma \, {\bm 1}}_{\mbox{\small{$S$ card}}}
   + \underbrace{\big. \ell \ell^* }_{\mbox{\small{$I$ card}}}
   + \underbrace{\big. \alpha \, \ell \ell^* }_{\mbox{\small{$T$ card}}},
$$
where each coefficient corresponds to the weight of the card. We can also 
find that only the admissible arrangements of $n$ cards could contribute 
to the vacuum expectation of $X^n$. 
\end{proof}

\begin{remark}{\rm
The operator 
$$
  {\widetilde X} = 
   \underbrace{\gamma {\bm 1} + 
     \sqrt{\beta} \big( \ell + \ell^* \big) 
     + \ell \ell^* }_{\footnotesize \mbox{shifted free Poisson}}
    + \alpha ({\bm 1} + \ell)  \ell \ell^*
$$
has the same distribution as of the operator $X$, and 
the shifted free Poisson part is contained in ${\widetilde X}$ 
because 
$\displaystyle{ 
  \beta {\bm 1} + 
     \sqrt{\beta} \big( \ell + \ell^* \big) + \ell \ell^* 
}$
corresponds to the free Poisson random variable of 
parameter $\beta$. 
The operators in such a form of $X$ or ${\widetilde X}$ were 
also investigated by Bo{\.z}ejko and Lytvynov in \cite{BoLy09}.
}
\end{remark}

%
%
\section{The moments of the free beta prime distribution}
\label{sec:6}
%
%

In this section, we investigate the moments of the free beta 
prime distribution $\fbepr$ and see that $\fbepr$ is in the free Meixner 
family. Moreover we determine the free Meixner class to which 
the free beta prime distribution should be classified.

\smallskip

Since we know that the $S$-transform of $\fbepr$ is given by 
(2) in the proof of Proposition 3.2, the $T$-transform of $\fbepr$ 
is obtained as 
$\displaystyle{T_{\fbepr} (z) = \frac{z + a}{b -1 - z}}$, 
which has the following expansion:
$$
  T_{\fbepr} (z) = \frac{a}{b - 1} + (a + b - 1) 
       \sum_{k = 1}^{\infty} \frac{z^k}{(b - 1)^{k+1}} \\
  = \sum_{k=0}^{\infty} \alpha_k z^k.
$$
Thus the coefficients of the $T$-transform become 
$$
   \alpha_0 = \frac{a}{b - 1}, \; \;
   \alpha_k = \frac{a + b - 1}{(b - 1)^{k+1}} \; \;
   (k \ge 1).
$$
In order to simplify the expression, we put 
$$
   s = \frac{a}{b - 1}, \; \;
   t = \frac{a + b - 1}{b - 1} \;\mbox{ and } \;
   u = \frac{1}{b - 1}, 
$$
then we can write 
$\alpha_0 = s, \; \; \alpha_k = t \, u^k \; \; (k \ge 1)$
in short.

\medskip 

Applying the moment formula in Theorem 5.13, the $n$th moment 
of $\fbepr$ is given in a combinatorial form by 
$$
 \begin{aligned}
  m_n\big( \fbepr \big) 
      = & \sum_{\pi \in {\mathcal{NCL}(n)}}
           s^{n - |\pi|}  \! 
           \prod_{\substack{\lsblk{B} \in \pi \\ |\lsblk{B}| \ne 1}}
           t \, u^{|\lsblk{B}|-1} \! 
           \prod_{\substack{\lsblk{B} \in \pi \\ |\lsblk{B}| = 1}} 
            s 
          \\
      = & \sum_{\pi \in {\mathcal{NCL}(n)}}
           s^{n - |\pi|}  \! 
           \prod_{\lsblk{B} \in \pi}
           t \, u^{|\lsblk{B}|-1}  \! 
           \prod_{\substack{\lsblk{B} \in \pi \\ |\lsblk{B}| = 1}} \! 
           \Big( \frac{s}{\, t \,} \Big) .
  \end{aligned}
\eqno{(8)}
$$
Using the relations (5) and (6) among the set partition 
statistics $sg$, $sc$, and $dc$ in Remark 5.9, 
the first product in the most right hand side of (8) can be 
reformulated as 
$$
 \begin{aligned}
   \prod_{\lsblk{B} \in \pi} t u^{|\lsblk{B}|-1} 
  & = \prod_{\lsblk{B} \in \pi}  \Big( \frac{t}{\, u \, } \Big) u^{|\lsblk{B}|}
    = \Big( \frac{t}{\, u \, } \Big)^{|\pi|} 
      u^{ \sum_{B \in \pi} |B|} \\
  & = \Big( \frac{t}{\, u \, } \Big)^{dc(\pi) + sc(\pi) + sg(\pi)} 
       u^{n + dc(\pi)} 
    = u^n \, t^{dc(\pi)} 
      \Big( \frac{t}{\, u \, } \Big)^{sc(\pi)}
      \Big( \frac{t}{\, u \, } \Big)^{sg(\pi)}.
 \end{aligned}
$$
It is clear that the second product in (8) becomes 
$$
  \prod_{\substack{\lsblk{B} \in \pi \\ |\lsblk{B}| = 1}} \! 
        \Big( \frac{s}{\, t \,} \Big)
    = \Big( \frac{s}{\, t \, } \Big)^{sg(\pi)}, 
$$
and that 
$$
   s^{n - |\pi|} = s^{n} \Big( \frac{1}{\, s \, } \Big)^{dc(\pi)}
      \Big( \frac{1}{\, s \, } \Big)^{sc(\pi)}
      \Big( \frac{1}{\, s \, } \Big)^{sg(\pi)}.
$$
Consequently, we obtain the following combinatorial formula of 
the $n$th moment of $\fbepr$.

\smallskip

\begin{theorem}
The $n$th moment of the free beta prime distribution $\fbepr$
is given by 
$$
  m_n\big( \fbepr \big) 
       = (s u)^n \! \! \! \! 
         \sum_{\pi \in {\mathcal{NCL}(n)}}  \! \! 
          \Big( \frac{t}{\, s \,} \Big)^{\! dc(\pi)}
          \Big( \frac{t}{s u}     \Big)^{\! sc(\pi)}
          \Big( \frac{1}{\, u \,} \Big)^{\! sg(\pi)}, 
$$
where 
$\displaystyle{s = \frac{a}{b - 1}}$, 
$\displaystyle{t = \frac{a + b - 1}{b - 1}}$, and 
$\displaystyle{u = \frac{1}{b - 1}}$, 
equivalently, it is given by 
$$
  m_n\big( \fbepr \big) 
       = \bigg( \frac{a}{(b-1)^2} \bigg)^{\!n} \! \! \! 
       \sum_{\pi \in {\mathcal{NCL}(n)}}  \! \! 
         \Big( \frac{a + b - 1}{a} \Big)^{\! dc(\pi)+ sc(\pi)}
         \big( b - 1 \big)^{\! sc(\pi)+ sg(\pi)}.
$$
A model of the $\fbepr$-distributed random variable $X(a,b)$ 
on a full Fock space $\mathcal{T}(\mathcal{H})$ can be given by 
the form 
$$
 \begin{aligned}
 X(a, b) = &  \sfrac{a}{(b - 1)^2} 
    \Big\{(b - 1) \, {\bm 1} + 
      \sfrac{(a + b - 1)(b - 1)}{a} \, \ell + \ell^* + \ell \ell^* 
     + \sfrac{a + b - 1}{a} ({\bm 1} + \ell)  \ell \ell^* \Big\} \\
  = &  \sfrac{a}{b - 1} \, {\bm 1}
     + \sfrac{a + b - 1}{b - 1}  \, \ell 
     + \sfrac{a}{(b - 1)^2}  \, \ell^* 
     + \sfrac{2 a + b - 1}{(b - 1)^2} \, \ell \ell^* 
     + \sfrac{a + b - 1}{(b - 1)^2} \, \ell^2 \ell^*, 
 \end{aligned}
$$
where $\ell = \ell(\xi)$ and $\ell^{*} = \ell(\xi)^{*}$ are 
the left creation and the left annihilation operators for 
a unit vector $\xi \in \mathcal{H}$, respectively.
\end{theorem}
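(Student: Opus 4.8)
The two moment formulas are obtained by finishing the bookkeeping already begun above the statement. Starting from the combinatorial expression (8) for $m_n\big(\fbepr\big)$ furnished by Dykema's formula (Theorem 5.13), I would substitute the three rewritten products: the reformulation of $\prod_{B\in\pi} t\,u^{|B|-1}$ as $u^n\,t^{dc(\pi)}(t/u)^{sc(\pi)}(t/u)^{sg(\pi)}$ (which uses relations (5) and (6) of Remark 5.9), the identity $\prod_{|B|=1}(s/t)=(s/t)^{sg(\pi)}$, and $s^{n-|\pi|}=s^n(1/s)^{dc(\pi)+sc(\pi)+sg(\pi)}$. Collecting the common scalar $(su)^n$ and gathering the exponents of $dc$, $sc$, $sg$ separately then leaves the per-statistic weights $t/s$, $t/(su)$, and $1/u$, which is the first displayed formula. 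The second formula is pure substitution: replacing $s=a/(b-1)$, $t=(a+b-1)/(b-1)$, $u=1/(b-1)$ gives $su=a/(b-1)^2$, $t/s=(a+b-1)/a$, $t/(su)=(a+b-1)(b-1)/a$, and $1/u=b-1$, after which one regroups $(t/s)^{dc}(t/(su))^{sc}(1/u)^{sg}$ into $\big((a+b-1)/a\big)^{dc+sc}(b-1)^{sc+sg}$.

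For the Fock-space model I would invoke Proposition 5.15, which exhibits, for arbitrary scalars $\alpha,\beta,\gamma$, an operator $X=\gamma\,{\bm 1}+\beta\,\ell+\ell^*+(1+\alpha)\,\ell\ell^*+\alpha\,\ell^2\ell^*$ whose vacuum moments are exactly $\sum_{\pi\in\mathcal{NCL}(n)}\alpha^{dc(\pi)}\beta^{sc(\pi)}\gamma^{sg(\pi)}$. Comparing this with the first moment formula, I match the statistics-weights by setting $\alpha=t/s$, $\beta=t/(su)$, $\gamma=1/u$, and absorb the leftover scalar $(su)^n$ through a dilation: since $\varphi\big((su\,X)^n\big)=(su)^n\varphi(X^n)$, the operator $su\cdot X$ reproduces the moment sequence of $\fbepr$ term by term. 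Because the paper takes the distribution of an element to be its moment sequence, and because $\fbepr$ is compactly supported (Definition 3.4), hence moment-determinate, this scaled operator is a genuine model. It then remains to substitute the above values of $\alpha,\beta,\gamma$ and the factor $su=a/(b-1)^2$ into $su\cdot X$; using $1+\alpha=(2a+b-1)/a$ and $({\bm 1}+\ell)\ell\ell^*=\ell\ell^*+\ell^2\ell^*$ recovers both the factored and the fully expanded displayed forms.

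No step presents a real obstacle; the content is careful exponent-tracking together with one parameter identification. The only point worth flagging is conceptual rather than computational: the reason a \emph{finite} Fock operator suffices, rather than the generic canonical multiplicative operator $({\bm 1}+\ell)\,T_\mu(\ell^*)$ of Section 2.3 (an infinite series), is precisely that the higher $T$-transform coefficients $\alpha_k=t\,u^k$ form a geometric sequence, so that the continued fraction (9) is eventually periodic and Proposition 5.15 applies verbatim. As an independent check one may verify directly, using the closed forms in Theorem 5.11 and in Proposition 3.2, that $\Gamma\big((su)\,z;\,t/s,\,t/(su),\,1/u\big)=M_{X(a,b)}(z)$.
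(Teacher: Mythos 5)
Your proposal is correct and follows essentially the same route as the paper: the moment formulas are obtained by exactly the exponent bookkeeping the paper carries out just before the theorem (substituting the rewritten products into (8) and collecting $(su)^n$), and the Fock-space model is, as the paper itself notes, an immediate consequence of Proposition 5.15 with the identification $\alpha = t/s$, $\beta = t/(su)$, $\gamma = 1/u$ and the overall dilation by $su = a/(b-1)^2$. Your added remarks on moment-determinacy and on why a finite (rather than series-valued) Fock operator suffices are correct but supplementary; they do not change the argument.
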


The model of the random variable $X(a,b)$ is an immediate consequence 
of Proposition 5.15.

\medskip

A family of the free Meixner distributions contains many important 
laws in free probability, which has been investigated in many 
literatures, for instance, \cite{An03},  \cite{BoBr06},  \cite{By10},
\cite{Ej12}, and \cite{SY01}.

In particular, Bo{\. z}ejko and Bryc in \cite{BoBr06} showed that the 
family of the standard (mean $0$ and variance $1$) free Meixner 
distributions is parameterized by $\theta$ and $\tau$ as 
\vspace{-5pt}
$$ 
  \big\{ \mu(\theta, \tau) \, : \,  
        \theta \in {\mathbb R}, \tau \ge -1 \big\}
\vspace{-5pt}
$$ 
with the Cauchy transform 
$$
 G_{\mu(\theta, \tau)} (z) = 
     \frac{(1 + 2 \tau) z + \theta - \sqrt{(z - \theta)^2 - 4 (1 + \tau)}}
          {2 (\tau z^2 + \theta z  +1)}.
$$
They also showed that the free Meixner distributions can be 
classified into six classes, which was inspired by the fact that 
the classical Meixner distributions satisfy Laha-Lukacs properties 
with similar parameters.

\smallskip

Here we will see that the free beta prime distribution $\fbepr$ is in 
a family of the free Meixner distributions and determine its class 
in the free Meixner family.

Let $X(a,b)$ be a free beta prime $\fbepr$ distributed random variable.
By the moment formula, it is easy to find that $X(a,b)$ has mean 
$m = \displaystyle{\sfrac{a}{b - 1}}$ and 
variance $v = \displaystyle{\sfrac{a (a + b - 1)}{(b - 1)^3}}$.

We shall standardize $X(a,b)$ as 
$\widetilde{X}(a,b) = \dfrac{X(a,b) - m}{\sqrt{v}}$,
since we know the Cauchy transform $G_{X(a,b)}$ as in (1) 
in Proposition 3.2, the Cauchy transform of $\widetilde{X}(a,b)$ 
can be obtained by direct calculation as follows:
$$
 \begin{aligned}
   G_{\widetilde{X}(a,b)} (z) & =  
   \sqrt{v} \, G_{X(a,b)}  \big( \sqrt{v} z + m \big) \\
 & =  \frac{ \; 
      \left(
      \begin{aligned}
        & \dfrac{1 + b}{1 - b} \, z  
         + \dfrac{2 a + b - 1}{\sqrt{ a (a + b - 1)(b - 1)}} \\
        & \qquad \; \;  - \sqrt{ \, z^2 
          - 2 \, \frac{2 a + b - 1}{\sqrt{ a (a + b - 1)(b - 1)}} \, z
      + \frac{b - 1}{a (a + b - 1)}  - 4} 
      \end{aligned} \right) \;
     }
     {2 \, \bigg( 
         \dfrac{1}{b - 1} \, z^2 
        + \dfrac{2 a + b - 1}{\sqrt{ a (a + b - 1)(b - 1)}} \, z + 1 \bigg)}.
 \end{aligned}
$$
If we put 
$$
 \theta = \frac{2 a + b - 1}{\sqrt{ a (a + b - 1)(b - 1)}} \;
 \mbox{ and } \; 
 \tau = \frac{1}{b - 1}
$$
then it follows that 
$$
 \begin{aligned}
   \big( z - \theta \big)^2 - 4 (1 + \tau) 
  & =  z^2 - 2 \, \frac{2 a + b - 1}{\sqrt{ a (a + b - 1)(b - 1)}} \, z
      + \frac{b - 1}{a (a + b - 1)}  - 4, \\
   1 + 2 \tau & = \frac{1 + b}{1 - b},
 \end{aligned}
$$
which means that the distribution of $\widetilde{X}(a,b)$ is in a family 
of the free Meixner distribution with the above parameters $\theta$ and
$\tau$.

\smallskip 

According to the classification table by \cite{BoBr06}, the distribution 
of the random variable $\widetilde{X}(a,b)$ is classified into the free 
negative binomial distributions because the classification 
parameters satisfy the inequality 
$$
   \theta^2 - 4 \tau = \frac{b - 1}{a (a + b -1)}  > 0.
$$

Hence we can conclude the following proposition:

\begin{proposition}
The free beta prime distribution $\fbepr$ is in the class of 
the free negative binomial distributions and, hence, it is freely 
infinitely divisible.
\end{proposition}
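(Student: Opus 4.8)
The plan is to deduce the statement entirely from the Bo{\.z}ejko--Bryc classification of the free Meixner family recalled above, so that the proof reduces to identifying the two classification parameters and reading off the class. First I would standardize the free beta prime variable, setting $\widetilde{X}(a,b) = (X(a,b) - m)/\sqrt{v}$ with $m = a/(b-1)$ and $v = a(a+b-1)/(b-1)^3$ read off from the first two moments. Under this affine change the Cauchy transform obeys $G_{\widetilde{X}(a,b)}(z) = \sqrt{v}\, G_{X(a,b)}(\sqrt{v}\,z + m)$, so I would substitute formula (1) of Proposition 3.2 and simplify.

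Next I would compare the simplified expression with the free Meixner Cauchy transform $G_{\mu(\theta,\tau)}(z)$, whose denominator is $2(\tau z^2 + \theta z + 1)$ and whose radicand is $(z - \theta)^2 - 4(1 + \tau)$. Matching the denominator and the radicand forces $\theta = (2a + b - 1)/\sqrt{a(a+b-1)(b-1)}$ and $\tau = 1/(b-1)$, after which one verifies that the leading coefficient $1 + 2\tau$ in the numerator is consistent with the standardized transform. This identifies $\widetilde{X}(a,b)$ as a standard free Meixner law with these parameters.

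With $(\theta, \tau)$ in hand I would form the classification discriminant $\theta^2 - 4\tau = (b-1)/(a(a+b-1))$ and note that it is strictly positive because $a > 0$ and $b > 1$; the same hypotheses give $\tau = 1/(b-1) > 0$. The Bo{\.z}ejko--Bryc table then assigns $\mu(\theta,\tau)$ to the free negative binomial class, which is the first assertion. For the second, I would invoke the fact that a free Meixner law $\mu(\theta,\tau)$ is freely infinitely divisible precisely when $\tau \ge 0$ (see \cite{SY01}); since $\tau > 0$ here, $\widetilde{X}(a,b)$ is freely infinitely divisible, and as free infinite divisibility is preserved under affine transformations this transfers back to $\fbepr$.

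I do not expect a genuine obstacle, since all the analytic computation is already carried out in the discussion preceding the statement; the proof is essentially an assembly of the standardization, the parameter identification, and the appeal to the classification. The one point demanding care is the branch of the square root when applying the affine rule to $G_{X(a,b)}$: one must confirm that the transformed branch still satisfies the normalization at infinity, so that the matched function really is $G_{\mu(\theta,\tau)}$ and not a conjugate branch, which is precisely what legitimizes reading off $\theta$ and $\tau$ from the coefficients.
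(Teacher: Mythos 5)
Your proposal is correct and follows essentially the same route as the paper: standardize $X(a,b)$, match the resulting Cauchy transform against the Bo\.{z}ejko--Bryc form to read off $\theta = (2a+b-1)/\sqrt{a(a+b-1)(b-1)}$ and $\tau = 1/(b-1)$, and conclude from $\theta^2 - 4\tau = (b-1)/(a(a+b-1)) > 0$ (together with $\tau>0$) that the law lies in the free negative binomial class. Your explicit appeal to $\tau \ge 0$ for free infinite divisibility and the caution about the branch of the square root are sensible additions, but they do not change the argument.
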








\end{document}